\titleformat*{\section}{\large\bfseries}
\titleformat*{\subsection}{\bfseries}
\theoremstyle{plain}
  \newtheorem{thm}{Theorem}
  \newtheorem{prop}[thm]{Proposition}
\theoremstyle{definition}
  \newtheorem{remark}{Remark}
\numberwithin{equation}{section}
\def\be{\begin{equation}}
\def\ee{\end{equation}}
\DeclareMathOperator{\tr}{Tr} % Trace
\DeclareMathOperator{\sgn}{sgn} % Sign
\begin{document}

%\begin{center}
%{\bfseries\huge Eigenvalue statistics for product matrix ensembles of Hermite-type with  external source}\\[2\baselineskip]
%
%{\Large Dang-Zheng Liu}\\[.5\baselineskip]
%{\itshape Key Laboratory of Wu Wen-Tsun Mathematics, CAS, School of Mathematical Sciences, University of Science and Technology of China, Hefei 230026, P.R.~China\\
%Current address: Institute of Science and Technology Austria,  Klosterneuburg 3400, Austria}
%
%\end{center}

\title{\bfseries\huge 
Spectral statistics for product matrix ensembles of Hermite type with external  source}
\author{\huge Dang-Zheng Liu\footnote{ Key Laboratory of Wu Wen-Tsun Mathematics, CAS, 
 School of Mathematical Sciences, University of Science and Technology of China, Hefei 230026, 
P.R.~China.  Email: dzliu@ustc.edu.cn.   Current address: Institute of Science and Technology Austria,  Klosterneuburg 3400, Austria }}
%\author{\huge Dang-Zheng Liu\footnote{ Email: dzliu@ustc.edu.cn.  Current address: Institute of Science and Technology Austria,  Klosterneuburg 3400, Austria}\\[.5\baselineskip]
% \itshape Key Laboratory of Wu Wen-Tsun Mathematics, CAS, 
% School of Mathematical \\ Sciences, University of Science and Technology of China, Hefei 230026, 
%P.R.~China}

\date{\today}

\maketitle
\begin{abstract}
\noindent
We continue investigating  spectral properties of a Hermitised random matrix product% initiated by Forrester, Ipsen and the author, 
, which,  contrary to previous product ensembles, allows for eigenvalues on the full real line. When a GUE matrix with an external source is involved,  we prove that  the eigenvalues of the product form a determinantal point process and derive   a double integral representation  for  correlation kernel.  As the source changes,  we observe a critical value and establish    the existence of  a phase transition for scaled eigenvalues at the origin.  Particularly in  the critical case, we obtain a new family of Pearcey-type kernels.  \end{abstract}

\section{Introduction and main results} \label{sect:intro}

\subsection{Motivations}

In this paper we continue the investigation of  spectral properties of  Hermitised product matrix ensembles initiated in \cite{FIL17}.  More specifically,  
suppose that  (1) each $G_i$ ($i=1,\dots, M$) is a standard  complex Ginibre   matrix of size $(\nu_{m-1}+n) \times (\nu_{m} +n)$
  with $\nu_0=0, \nu_1, \ldots, \nu_M\geq 0$, i.e. a matrix with i.i.d. standard complex Gaussian entries; (2)  $H$ is  an $n\times n$ matrix 
  from the Gaussian unitary ensemble (GUE) with an external source which is specified by 
   the probability measure  on $\mathbb{R}^{n^2}$ with density 
\begin{equation}\label{meanGUE}
2^{\frac{1}{2}n(n-1)} \pi^{-\frac{1}{2}n^2}e^{-\mathrm{tr}(H-B)^2}, 
\end{equation} 
where $B$ is a deterministic $n\times n$ Hermitian matrix with eigenvalues denoted by $b_1, \ldots, b_n$,  we are devoted to studying  the eigenvalues of the Hermitised product matrix
\begin{equation}\label{W1}
W_M = G_M^* \cdots G_1^* H G_1 \cdots G_M
\end{equation}
under the assumption that all matrices, $H$ and $G_i$ ($i=1,\ldots,M$), are independent. We will see that the eigenvalues of $W_M$  form a determinantal point process as in the situation without $H$ which was first studied  by  Akemann et al. \cite{AIK13, AKW13}.

When $B=0$ in \eqref{meanGUE}, the global and local spectral properties   for the product \eqref{W1} have recently been studied in  \cite{FIL17}.  In particular, a new family of Meijer G-function type kernels
 at the origin was found therein, which is defined  for  $x, y\in  \mathbb{R} \setminus \{0\}$   by
 \begin{align} \mathcal{K}_{\nu_1,\ldots,\nu_M}^{\textup{(sub)}}(x,y)=  \frac{1}{2\pi i}\int_{C_{R}}dv\   G^{1,0}_{0,M+1} &\Big({\atop 0, -\nu_1,\ldots,-\nu_M}\Big|-\sgn(xy) |x|v\Big)
 \nonumber \\
&    \times  G^{M+1,0}_{0,M+1} \Big({\atop 0, \nu_1,\ldots,\nu_M}\Big| | y|v\Big)  \label{subcritical}\end{align}
with $C_{R}$ denoting a path in the right half-plane from $-i$ to $i$; see e.g. \cite{Ol10} for definition of Meijer G-functions. This is slightly different  from the Meijer $G$-kernel  defined for  $x,y>0$  by 
 \begin{align} K^{M}_{\textup{Meijer}}(x,y)=  \int_{0}^{1}du\,
  G^{1,0}_{0,M+1} \Big({\atop 0, -\nu_1,\ldots,-\nu_M}\Big| xu\Big) 
G^{M,0}_{0,M+1} \Big({\atop 0, \nu_1,\ldots,\nu_M}\Big| yu \Big),  \label{Meijer}\end{align}
which was first obtained  in \cite{KZ14} for the product of independent  Ginibre matrices, i.e.,   \eqref{W1} but with $H=I_n$.

Actually,  the past few years have witnessed a very rapid development in the  topic of products of  independent  random matrices. 
A crucial advance was the derivation of exact   eigenvalue density for the  product   \eqref{W1} with $H=I_n$  by
Akemann and his coworkers \cite{AKW13,AIK13}, which shows that it  forms a determinantal point process.  Subsequently, it was shown by Kuijlaars and Zhang in
 \cite{KZ14} that 
the corresponding correlation kernel  admits a double integral formula.  
These  have opened up the possibility to investigate  local statistical properties of eigenvalues.  Actually, a new family of limiting kernels, so-called   Meijer $G$-kernels \eqref{Meijer}, was found in \cite{KZ14}  at the
hard edge  and  the standard Sine and Airy kernels in the bulk and soft edge of the spectrum was proved in  \cite{LWZ14}.   Even more interestingly,  the   Meijer $G$-kernel also appears in other product ensembles \cite{Fo14,KS14,KKS15} and Cauchy matrix models \cite{BB15, BGS14}.  All these studies  form part of a fast paced and very recent literature 
relating to the  integrability and universality of random matrix products. We refer the reader to \cite{AI15} for a recent survey.

 In another special case when $M=0$,  \eqref{W1} reduces to the well-known Gaussian Unitary Ensemble with external source (also called deformed GUE ensemble in the literature).
 The  deformed GUE  ensemble  has  been treated in a series of papers \cite{AM07, ABK05, BK04, BK07, BH96, BH98, CP16, CKW15, CW14, Jo01, Pe06,  Sh11,TW06}.  More generally,  see \cite{KY14, LS16, LSSY} and references therein  for deformed  Wigner  ensembles.  As the eigenvalues of  the source $B$ change at a certain critical rate,  except that  there exists a  phase transition for largest eigenvalues due to Baik, Ben Arous and P\'{e}ch\'{e} \cite{BBP, Pe06} (sometimes  called BBP transition in the literature),  another  interesting phenomenon  will appear at the origin and can be  described by the so-called Pearcey kernel \cite{BH98} (a very special case of \eqref{critical}  below  where $M=0$ and $p=0$). See    also  \cite{AM07, BK07, CP16, OR07, TW06} for  the Pearcey kernel.

 It is worth stressing that the product \eqref{W1} with $H=(G_{0}+A)^{*}(G_{0}+A)$,  where $G_0$ is a Ginibre matrix and $A$ is a deterministic matrix, 
 has been studied in \cite{FL16}.  As the source matrix $A$ changes,   a  phase transition phenomenon for smallest singular values is observed  at the origin. 
 In particular, there exists a new family of kernels defined in terms of Meijer $G$-functions at  the critical value.  It's our goal  in the present  paper  to prove the existence of  a   phase transition  at the origin 
for  the product \eqref{W1} with $H$ distributed according to  the density \eqref{meanGUE}.

\subsection{Main results}
 Let  $\Delta_{n}(x)= \prod_{1\leq i<j\leq n}(x_j-x_i)$ denote the Vandermonde determinant.  
We are ready to state our main results as follows. 

The first is about the eigenvalue probability density function (PDF for short)  of the product \eqref{W1} and  can be derived after a direct application of    \cite[Lemma 2]{FIL17}. 

\begin{prop}\label{pdfGUEthm} 
Let  $\nu_0=0, \nu_1, \ldots, \nu_M$ be non-negative integers.  Suppose that 
$H$ is a random  $n\times n$ Hermitian matrix  with density  \eqref{meanGUE} and that  $G_1, \ldots, G_M$  are independent  standard complex Gaussian matrices where  $G_m$ is of size $(\nu_{m-1}+n) \times (\nu_{m} +n)$, independent of $H$.  Then  the  joint PDF for non-zero eigenvalues of  the product $W_{M}$  defined in ~\eqref{W1} is given by
\begin{equation}\label{PDF-meanGUE}
P_{n,M}(x_1,\ldots,x_n)=\frac{1}{Z_{n,M}} \Delta_{n}(x)\, \det\big[g_{M}( x_i, b_j)\big]_{i,j=1}^{ n},  \quad x_1,\ldots,x_n \in  \mathbb{R} \setminus \{0\},
\end{equation}
where   $g_{M}$ is a   function of two variables  defined  for  $(y, v)  \in  \mathbb{R}\setminus \{0\} \times  \mathbb{C}$ by   \begin{equation}\label{weight}
g_{M}(y,v)=\int_0^\infty \frac{dt_1}{t_1} \cdots  \int_0^\infty \frac{dt_M}{t_M} \,\prod_{l=1}^{M} t_{l}^{\nu_l}e^{-t_{l}}\,  \exp\!\Big\{-\frac{y^2}{(t_{1}\cdots t_{M})^2}+\frac{2yv}{t_{1}\cdots t_{M}}\Big\}, 
\end{equation}
%where  $g_j^{(M)}$ is defined recursively by
%\begin{equation}\label{weight-recursive}
%g_j^{(0)}(x)=\frac{1}{\sqrt{\pi}}e^{-(x-\frac{1}{2}b_{j+1})^2}
%\quad\text{and}\quad
%g_j^{(m)}(x)=\int_0^\infty \frac{dy}{y}\,y^{\nu_m}e^{-y}\,g_j^{(m-1)}(x/y),\quad m=1,\ldots,M
%\end{equation}
and the normalisation constant 
\begin{equation}
Z_{n,M}=n!\, e^{\sum_{l=1}^{n}b_{l}^2} \Delta_{n}(b)\prod_{m=1}^M\prod_{j=1}^n\Gamma(\nu_m+j). \end{equation}
When some of $b_{j}$'s coincide,  L'Hospital's rule provides an appropriate   density.
\end{prop}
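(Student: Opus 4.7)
The plan is to apply \cite[Lemma 2]{FIL17}, which gives the joint eigenvalue density of the Hermitised product $G_M^*\cdots G_1^* H G_1\cdots G_M$ conditionally on a fixed Hermitian $H$, and then to average over the deformed GUE law of $H$ via the Harish-Chandra--Itzykson--Zuber (HCIZ) formula. The construction is set up so that these two inputs plug into each other almost mechanically.

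First, by unitary invariance of $W_M$ under $H\mapsto UHU^*$, only the spectrum of $H$ matters. Diagonalising $H$ and carrying out the angular HCIZ integration, the density \eqref{meanGUE} collapses to the joint eigenvalue law
\begin{equation*}
\rho(h_1,\ldots,h_n)\;=\;\frac{1}{C_n}\,\frac{\Delta_n(h)\,\det\bigl[e^{2 h_i b_j}\bigr]_{i,j=1}^n}{\Delta_n(b)}\,\exp\!\Bigl(-\sum_i h_i^2-\sum_j b_j^2\Bigr),
\end{equation*}
where $C_n$ is an explicit constant combining the $2^{n(n-1)/2}\pi^{-n^2/2}$ prefactor of \eqref{meanGUE}, the Haar volume on $U(n)/T^n$, and the HCIZ constant $\prod_{k=1}^{n-1}k!\,/\,2^{n(n-1)/2}$.

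Next, conditioning on the eigenvalues $h_1,\ldots,h_n$, \cite[Lemma 2]{FIL17} writes the conditional PDF of the non-zero eigenvalues of $W_M$ in the biorthogonal determinantal form
\begin{equation*}
P(x\mid h)\;\propto\;\frac{\Delta_n(x)\,\det\bigl[w_M(x_i,h_j)\bigr]_{i,j=1}^n}{\Delta_n(h)},
\end{equation*}
where $w_M(x,h)$ is the multi-integral weight produced by the $M$ Ginibre conjugations, already carrying the $\prod_l (dt_l/t_l)\, t_l^{\nu_l}e^{-t_l}$ structure visible in \eqref{weight}, and where the proportionality constant accumulates the Gamma factors $\prod_{m,j}\Gamma(\nu_m+j)$. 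When $\rho(h)\,P(x\mid h)$ is formed, the Vandermonde $\Delta_n(h)$ cancels between numerator and denominator, and Andr\'eief's identity reduces the $n$-fold $h$-integration of the two remaining determinants to
\begin{equation*}
n!\,\det\!\Bigl[\int_{\mathbb{R}} w_M(x_i,h)\,e^{-h^2+2 h b_j}\,dh\Bigr]_{i,j=1}^n.
\end{equation*}

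The inner one-dimensional integration is the step that produces $g_M$: completing the square $-h^2+2hb_j=-(h-b_j)^2+b_j^2$ and integrating $w_M$ against the Gaussian $e^{-(h-b_j)^2}$ generates precisely the exponent $-x^2/(t_1\cdots t_M)^2+2xb_j/(t_1\cdots t_M)$ of \eqref{weight}, identifying the resulting determinant with $\det[g_M(x_i,b_j)]$. Collecting the leftover constants from Steps~1--2---the Haar/HCIZ factors, the $2^{n(n-1)/2}\pi^{-n^2/2}$ prefactor, the $e^{-\sum_j b_j^2}/\Delta_n(b)$ left over in $\rho(h)$, and the $\prod_{m,j}\Gamma(\nu_m+j)$ from $w_M$---telescopes into the stated normalisation $Z_{n,M}=n!\,e^{\sum_l b_l^2}\,\Delta_n(b)\prod_{m,j}\Gamma(\nu_m+j)$. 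The degenerate case of coinciding $b_j$'s then follows from the smooth dependence of the integrand on $b$, applying the usual L'H\^opital limit to $\det[g_M(x_i,b_j)]/\Delta_n(b)$.

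The main obstacle is the bookkeeping: one has to pin down the weight $w_M$ from \cite[Lemma 2]{FIL17} and the surrounding Gamma-function constants precisely enough that the Gaussian $h$-integration in the Andr\'eief step reproduces the exponent of \eqref{weight} on the nose, and that every prefactor recombines into $Z_{n,M}$ without a stray power of $2$ or $\pi$. Modulo this bookkeeping, the argument is a routine combination of HCIZ and Andr\'eief applied on top of the prior lemma.
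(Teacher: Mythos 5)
Your route and the paper's share the same two ingredients (the deformed-GUE eigenvalue density and \cite[Lemma~2]{FIL17}), but you organise them in the opposite order, and this costs you an extra step and also leads you to mis-state what Lemma~2 of \cite{FIL17} does. The paper does \emph{not} condition on a fixed spectrum of $H$: it starts from the well-known biorthogonal form of the deformed GUE, namely $P_{n,0}\propto\Delta_n(x)\det[g_0(x_i,b_j)]$ with $g_0(y,v)=e^{-y^2+2yv}$ (the HCIZ computation and the Andr\'eief step are already packaged inside this classical fact), and then applies Lemma~2 of \cite{FIL17} \emph{as a one-step transform on the weight function}, sending $g_{m-1}(y,v)\mapsto g_m(y,v)=\int_0^\infty\frac{dt}{t}\,t^{\nu_m}e^{-t}\,g_{m-1}(y/t,v)$, iterated $M$ times to reach $g_M$ in \eqref{weight}. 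The phrase ``repeating the lemma $M$ times'' in the paper makes this clear. Your proposal instead fixes the spectrum $h_1,\dots,h_n$, takes a conditional biorthogonal density $\propto\Delta_n(x)\det[w_M(x_i,h_j)]/\Delta_n(h)$, and only at the end integrates out $h$ by Andr\'eief against the deformed-GUE law. This re-derives the already-known deformed-GUE biorthogonal form from scratch, and it requires a \emph{fixed-$H$} version of the Lemma~2 transform (starting the iteration from $\det[\delta(x_i-h_j)]$ rather than from $\det[g_0(x_i,b_j)]$), which is not literally the statement of Lemma~2 of \cite{FIL17}. The conclusion one would reach by your route agrees with the paper's, because by Fubini the $h$-integration and the $t$-integrations commute, but be aware that: (a) the weight $w_M(x,h)$ in the conditional density is distribution-valued (a delta kernel pushed through the $t$-integrations), so the inner ``Gaussian integration'' you describe is really a substitution rather than a Gaussian integral against a nice density --- completing the square $-h^2+2hb_j=-(h-b_j)^2+b_j^2$ does produce the exponent of \eqref{weight} (and the stray $-b_j^2$ absorbs into the $e^{\sum b_l^2}$ in $Z_{n,M}$), but only after the delta has fired; and (b) you should not cite \cite[Lemma~2]{FIL17} as giving the full conditional density for fixed $H$ directly. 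Once those two points are repaired, your argument is a valid, if more roundabout, alternative to the paper's; but the paper's version is both shorter and exactly what the cited lemma provides.
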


Our second result is a double integral representation of  correlation kernel  for  the  bi-orthogonal ensemble \eqref{PDF-meanGUE}  as a   determinantal point process (see e.g. \cite{Bo99} for  the  bi-orthogonal ensemble with more details).  For this, let us introduce one auxiliary  function, which is defined for  non-negative integers $\nu_1, \ldots, \nu_M$ and  for 
$(x, u)  \in  \mathbb{R}\times  \mathbb{C}$ by
  \begin{align}f_{M}(x,u)&= \frac{1}{(2\pi i)^{M}}     \int_{\mathcal{C}_{0}} \frac{d s_1}{s_1}\cdots \int_{\mathcal{C}_{0}}\frac{d s_M}{s_M} \,   \prod_{l=1}^M s_{l}^{-\nu_l}e^{s_l}  \,    \exp\Big\{\frac{x^2}{(s_1\cdots s_M)^2}-\frac{2xu}{s_1\cdots s_M}\Big\} 
 \label{function1}
% &=  \frac{1}{2\pi i}|x|^{-1-u} \int_{\mathcal{C}_{-\infty}}dw\, (-x^2)^{-w}   \Gamma(w) \prod_{l=1}^{m} \frac{1}{\Gamma(\nu_l-u-2w)} ,  \label{function1'}
% \frac{|y|^{v}}{2\pi i} \int_{c-i\infty}^{c+i\infty} ds\, |y|^{-2s}   \Gamma(s) \prod_{l=1}^{m} \Gamma(\nu_l-v+2s)
\end{align}
where  $ \mathcal{C}_0 $  is an  anticlockwise loop around the origin. 

 Note that  when $M=0$ $f_{0}(x,u)=e^{x^2-2xu}$ and $g_{0}(y,v)=e^{-y^2+2yv}$, by convention. Moreover, it is easy to  verify two simple facts: (1)    $|f_{M}(x,u)| \leq C(x)  e^{|xu|}$ for some constant depending on $x$, just by letting  each contour be a unit circle  and noting the inequality $|e^{z}|\leq e^{|z|}$;  (2) $ g_{M}(y,v) $  is an analytic function of $v$   whenever $y \in  \mathbb{R} \setminus \{0\}$.

 \begin{thm}\label{kernelmeanGUE} With two functions  defined in \eqref{weight} and  \eqref{function1},  
the correlation  kernel  associated with the eigenvalue PDF \eqref{PDF-meanGUE} is given  by 
 \begin{equation}
  K_{n}(\textbf{b};x,y)= \frac{1}{2(\pi i)^2}  \int_{\mathcal{L}} du \int_{\mathcal{C}_{\textbf{b}}} dv f_{M}(x,u) g_{M}(y,v)\, e^{u^{2}-v^{2}} \frac{1}{u-v}\prod_{l=1}^n \frac{u-b_l}{v-b_l},  \label{integralkernel}
 \end{equation}
  where $\mathcal{C}_{\textbf{b}}$ encircles   $b_1, \ldots, b_n$  in an anticlockwise direction, and $\mathcal{L}$ is a path from $-i\infty$ to $i\infty$   not crossing  $\mathcal{C}_{\textbf{b}}$.
 \end{thm}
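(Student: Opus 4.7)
The plan is to treat the joint PDF~\eqref{PDF-meanGUE} as a bi-orthogonal ensemble in the sense of Borodin~\cite{Bo99} and then to repackage the resulting kernel as a double contour integral. Writing $\Delta_{n}(x)=\det[x_{j}^{i-1}]_{i,j=1}^{n}$, the density factors as $\det[\phi_{i}(x_{j})]\det[\psi_{i}(x_{j})]$ with $\phi_{i}(x)=x^{i-1}$ and $\psi_{j}(x)=g_{M}(x,b_{j})$, so the Eynard--Mehta formula yields a determinantal point process with correlation kernel
\[
K_{n}(x,y)=\sum_{i,j=1}^{n}x^{i-1}\,[G^{-1}]_{ji}\,g_{M}(y,b_{j}),\qquad G_{ij}=\int_{\mathbb{R}}z^{i-1}\,g_{M}(z,b_{j})\,dz.
\]
The task is then to identify this explicit bi-orthogonal sum with~\eqref{integralkernel}.

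First I would evaluate the Gram matrix $G$ explicitly. Substituting the definition~\eqref{weight}, completing the square as $-z^{2}/T^{2}+2zb_{j}/T=-(z/T-b_{j})^{2}+b_{j}^{2}$ with $T=t_{1}\cdots t_{M}$, and carrying out the Gaussian $z$-integral followed by the Gamma integrals over each $t_{l}$ yields
\[
G_{ij}=e^{b_{j}^{2}}\Bigl(\prod_{l=1}^{M}\Gamma(\nu_{l}+i)\Bigr)P_{i}(b_{j}),\qquad P_{i}(b)=\int_{\mathbb{R}}(w+b)^{i-1}e^{-w^{2}}dw,
\]
where $P_{i}$ is a Hermite-type polynomial of degree $i-1$ in $b$. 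Thus $G$ factors as a diagonal in $i$ carrying the $\prod_{l}\Gamma(\nu_{l}+i)$, a triangular change-of-basis, a Vandermonde $V(b)_{ij}=b_{j}^{i-1}$, and a diagonal $\mathrm{diag}(e^{b_{j}^{2}})$; its inverse is then controlled by Lagrange interpolation on $\{b_{l}\}$ together with the reciprocal Gamma factors.

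Next I would convert the two index sums into the claimed $(u,v)$ double integral. The sum over $j$, which carries $V(b)^{-1}$ applied to $g_{M}(y,b_{j})e^{-b_{j}^{2}}$, is folded into the $v$-integral via the standard residue identity
\[
\sum_{l=1}^{n}\frac{h(b_{l})}{\prod_{k\neq l}(b_{l}-b_{k})}=\frac{1}{2\pi i}\oint_{\mathcal{C}_{\mathbf{b}}}\frac{h(w)}{\prod_{l}(w-b_{l})}\,dw,
\]
which automatically produces the denominator $\prod(v-b_{l})$ and the factor $e^{-v^{2}}$ of~\eqref{integralkernel}. The sum over $i$, carrying the polynomial-in-$x$ part weighted by $\prod_{l}\Gamma(\nu_{l}+i)^{-1}$, is recognised as the $u$-integral: a direct power-series expansion shows
\[
f_{M}(x,u)=\sum_{i\geq 1}x^{i-1}\,A_{i}(u)\,\prod_{l=1}^{M}\frac{1}{\Gamma(\nu_{l}+i)},\qquad A_{i}(u)=\sum_{2m+k=i-1}\frac{(-2u)^{k}}{m!\,k!},
\]
so the contours $\oint s_{l}^{-\nu_{l}}e^{s_{l}}ds_{l}/s_{l}$ in~\eqref{function1} are precisely tailored to manufacture the required inverse Gamma factors, while the Gaussian piece $\exp(x^{2}/S^{2}-2xu/S)$ paired with $e^{u^{2}}$ along the vertical contour $\mathcal{L}$ supplies the polynomials in $x$ dual to $g_{M}(y,b_{l})$. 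The coupling $\tfrac{1}{u-v}\prod(u-b_{l})/\prod(v-b_{l})$ then emerges from the Cauchy-type identity needed to merge the $u$- and $v$-sides into a single double integral.

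The main technical obstacle is this last matching: verifying that $\int_{\mathcal{L}}e^{u^{2}}f_{M}(x,u)\prod_{k\neq l}(u-b_{k})\,du$ returns exactly the polynomials in $x$ dual (in the bi-orthogonal sense) to $g_{M}(y,b_{l})$, with the correct overall normalisation. This is an explicit book-keeping exercise that threads the two layers of contour integrals (over $s_{l}$ and over $u$) through the factorisation of $G$. As a concrete consistency check, in the $M=0$ case one has $f_{0}(x,u)=e^{x^{2}-2xu}$ and $g_{0}(y,v)=e^{-y^{2}+2yv}$, so that~\eqref{integralkernel} collapses to the classical Br\'ezin--Hikami double integral for the deformed GUE; the general-$M$ version follows from the same template, with the $M$-fold contour representation of $f_{M}$ supplying the factors $\Gamma(\nu_{l}+\cdot)^{-1}$ and that of $g_{M}$ supplying the matching $\Gamma(\nu_{l}+\cdot)$ factors in the appropriate places.
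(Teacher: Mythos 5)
Your proposal follows essentially the same route as the paper: Borodin's bi-orthogonal kernel formula, explicit evaluation of the moment/Gram matrix (giving Hermite-type polynomials times $\prod_{l}\Gamma(\nu_{l}+i)$), Lagrange interpolation to control the inverse, and a residue identity to fold the $j$-sum into the $v$-contour integral. The ``book-keeping'' you defer---matching the $i$-sum to the $u$-integral over $\mathcal{L}$---is exactly where the paper's proof invokes the integral representations $1/\Gamma(l)=\frac{1}{2\pi i}\int_{\mathcal{C}_{0}}s^{-l}e^{s}\,ds$ and $(2iz)^{k-1}=\frac{1}{\sqrt{\pi}\,i}\int_{\mathcal{L}}e^{(u-z)^{2}}H_{k-1}(iu)\,du$, which, inserted into the Lagrange-interpolation identity and after interchanging the order of integration, assemble $f_{M}(x,u)$ and the $e^{u^{2}}$ factor directly.
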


The third  is the key result of the present paper.  It describes a   phase transition phenomenon of eigenvalues  at the origin,   as the source matrix  $B$ changes. Specifically,  
 except for  finitely many eigenvalues of $B$, say $b_1, \ldots, b_r$,  we assume  that one half of  the rest  are equal to $\sqrt{n/2}a$  and the other  half  $-\sqrt{n/2}a$.   As  $n$ goes to infinity, we observe three different  families of limiting  kernels.
\begin{thm} [Phase transition at the origin]\label{transition}  With   the kernel   \eqref{integralkernel}, 
 let $r$ be a fixed nonnegative integer such that $n-r=2n_0$ is even, and suppose that 
 \begin{equation} 
 b_{r+1}=\cdots=b_{r+n_{0}}=-b_{r+n_{0}+1}=\cdots=-b_{n}=\sqrt{n/2}a, \quad a\geq 0. \label{finiterank}
 \end{equation}
The following hold true  uniformly for $x, y$ in a compact  set of  $\mathbb{R}\setminus\{0\}$. 

(i)  When  $0\leq a \leq \sqrt{2}/2$, let  $b_l=\sqrt{n/2}a_{l}$ such that $|a_l|<a+1$ for $l=1, \ldots, r$, then 
\begin{equation}
 \lim_{n\rightarrow \infty}\frac{1}{ \sqrt{2(1-a^{2})n}} K_{n}\Big(\textbf{b};\frac{x}{ \sqrt{2(1-a^{2})n}},\frac{y}{ \sqrt{2(1-a^{2})n}}\Big)=\mathcal{K}_{\nu_1,\ldots,\nu_M}^{\textup{(sub)}}(x,y),
\end{equation}
 where  $\mathcal{K}_{\nu_1,\ldots,\nu_M}^{\textup{(sub)}}(x,y)$ is defined by   \eqref{subcritical}.
% \begin{align}& \mathcal{K}^{\textup{sub}}(x,y)=  \nonumber \\
% &\frac{1}{2\pi i}\int_{C_{R}}dv \ G^{1,0}_{0,M+1} \Big({\atop 0, -\nu_1,\ldots,-\nu_M}\Big|-\sgn(xy) |x|v\Big) G^{M+1,0}_{0,M+1} \Big({\atop 0, \nu_1,\ldots,\nu_M}\Big| | y|v\Big) \label{subcritical}\end{align}
%with $C_{R}$ denoting a path in the right half-plane from $-i$ to $i$. %\{e^{i\theta}:-\frac{1}{2}\pi \leq \theta \leq \frac{1}{2}\pi \}$.
% 

 (ii)  When  $a=(1-\frac{\tau}{2\sqrt{n}})^{-1}$ with real $\tau$, for $0\leq p_{0}\leq p\leq r$ let 
\begin{equation} b_1= 2^{-\frac{1}{2}} n^{\frac{1}{4}}  a_1,  \ldots , b_p=2^{-\frac{1}{2}} n^{\frac{1}{4}} a_p,\   \ a_{1}\leq \cdots \leq a_{p_0}<0<a_{p_{0}+1}\leq \cdots \leq a_{p}, \end{equation} and  let $b_l=\sqrt{n/2}a_{l}$ with $a_{l}>0$ for $l=p+1,  \ldots, r$, then 
\begin{equation}
 \lim_{n\rightarrow \infty}\frac{1}{ \sqrt{2\sqrt{n}}} K_{n}\Big(\textbf{b};\frac{x}{  \sqrt{2\sqrt{n}}},\frac{y}{  \sqrt{2\sqrt{n}}}\Big)=\mathcal{ K}_{\nu_1,\ldots,\nu_M}^{\textup{crit}}(\tau;x,y)
\end{equation}
 where 
 \begin{align} \mathcal{K}_{\nu_1,\ldots,\nu_M}^{\textup{crit}}&(\tau;x,y)= \frac{1}{(2\pi i)^2}  \int_{i\mathbb{R}}  du \int_{ \Sigma_{-}\cup \Sigma_{+}} dv \,  e^{\frac{\tau}{2} ( u^{2}-v^{2})-\frac{1}{4}(u^{4}- v^4)} \frac{1}{u-v} \prod_{j=1}^{p}\frac{u-a_j}{v-a_j}  \nonumber \\ 
 &\quad \times G^{1,0}_{0,M+1} \Big({\atop 0, -\nu_1,\ldots,-\nu_M}\Big|xu\Big) G^{M+1,0}_{0,M+1} \Big({\atop 0, \nu_1,\ldots,\nu_M}\Big|-yv\Big). \label{critical}\end{align}
Here $\Sigma_{-}$ is a path in the left half-plane from $e^{-3i\pi/4}\infty$ to $e^{3i\pi/4}\infty$  with    $a_{1}, \ldots, a_{p_0}$ to its left side, while $\Sigma_{+}$ is a path in the right half-plane from $e^{i\pi/4}\infty$ to $e^{-i\pi/4}\infty$  with  $a_{p_{0}+1}, \ldots, a_{p}$ to its right side.

(iii)  When  $a>1$, for $1< p\leq r$
 let $b_l=a_{l}$  for $l=1,  \ldots, p$  and  let $b_m=\sqrt{n/2}a_{m}$ with $a_{m}\neq 0$ for  $m=p+1,  \ldots, r$,  then 
\begin{align}
 \lim_{n\rightarrow \infty} K_{n}\Big(\textbf{b};x, y\Big)&=\mathcal{ K}_{\nu_1,\ldots,\nu_M}^{\textup{sup}}(x,y),\end{align}
where 
\begin{equation}
 \mathcal{ K}_{\nu_1,\ldots,\nu_M}^{\textup{sup}}(x,y)= \frac{1}{2(\pi i)^2}  \int_{\mathcal{L}} du \int_{\mathcal{C}_{\textbf{a}}} dv f_{M}(x,u) g_{M}(y,v)\, e^{(1-\frac{1}{a^2})(u^{2}-v^{2})} \frac{1}{u-v}\prod_{l=1}^p \frac{u-a_l}{v-a_l}. \label{supkernel}
 \end{equation}
  %where $\mathcal{C}_{\textbf{a}}$ encircles   $a_1, \ldots, a_p$  in an anticlockwise direction, and $\mathcal{L}$ is a path from $-i\infty$ to $i\infty$   not crossing  $\mathcal{C}_{\textbf{a}}$.
%\begin{equation} b_1=\frac{a}{\sqrt{a^{2}-1}} a_{1},  \ldots , b_p=\frac{a}{\sqrt{a^{2}-1}} a_{p}, \end{equation} and  let $b_l=\sqrt{n/2}a_{l}$ with $a_{l}\neq 0$ for all $l=p+1,  \ldots, r$. Then 
%\begin{equation}
% \lim_{n\rightarrow \infty}\frac{\sqrt{a^{2}-1}}{ a} K_{n}\Big(\textbf{b};\frac{\sqrt{a^{2}-1}}{ a} x,\frac{\sqrt{a^{2}-1}}{ a}  y\Big)=\mathcal{ K}_{\nu_1,\ldots,\nu_M}^{\textup{sup}}(x,y)
%\end{equation}
% where  $\mathcal{ K}_{\nu_1,\ldots,\nu_M}^{\textup{sup}}(x,y)$ equals to $K_{p}(\textbf{a};x,y)$ defined by  \eqref{integralkernel}  with $n\mapsto p$ and $\{b_l\}\mapsto \{a_l\}$.
 \end{thm}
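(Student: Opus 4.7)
Starting from the double contour representation \eqref{integralkernel}, under the assumption \eqref{finiterank} the eigenvalue factor splits as
\[
\prod_{l=1}^{n}\frac{u-b_l}{v-b_l} \;=\; \Bigl(\frac{u^{2}-na^{2}/2}{v^{2}-na^{2}/2}\Bigr)^{n_{0}}\,\prod_{l=1}^{r}\frac{u-b_l}{v-b_l},
\]
isolating a large-$n$ bulk exponent from a finite-rank perturbation. The plan is, for each of the three regimes, to identify the saddles of the effective phase
\[
\Phi_{n}(u)\;=\;u^{2}+\tfrac{n_{0}}{n}\log(u^{2}-na^{2}/2),
\]
rescale $(u,v,x,y)$ so that the saddle structure becomes $n$-independent, and extract in parallel the limits of $f_{M}$ and $g_{M}$, whose Hankel and Mellin--Barnes representations degenerate to the Meijer $G$-factors appearing in the statement.

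For the \emph{sub-critical} regime (i), the saddle equation $2u+u/(u^{2}-na^{2}/2)=0$ yields purely imaginary saddles $u=\pm i\sqrt{n(1-a^{2})/2}$ lying on $\mathcal{L}$, well separated from the real poles at $\pm\sqrt{n/2}\,a$. A second-order expansion there gives effective Gaussian fluctuations of width $\sim 1/\sqrt{n(1-a^{2})}$, matching the prescribed rescaling $x\mapsto x/\sqrt{2(1-a^{2})n}$; after the Gaussian in $u$ is evaluated, the integrand is in standard Mellin--Barnes form and, using the degenerations of $f_{M}, g_{M}$ in the limit of small first argument with $xu$ and $yv$ kept finite, it matches $\mathcal{K}^{(\textup{sub})}$. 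The threshold $a\leq\sqrt{2}/2$ enters only at the step where $\mathcal{C}_{\textbf{b}}$ is contracted onto the descent path without crossing the saddles. In the \emph{super-critical} regime (iii) no rescaling of $x,y$ is needed: with $u,v=O(1)$, expanding
\[
\log\frac{u-b_l}{v-b_l} \;=\; \frac{v-u}{b_l}+\frac{v^{2}-u^{2}}{2b_{l}^{2}}+O(b_{l}^{-3})
\]
and summing over the $2n_{0}$ symmetric pairs $b_{l}=\pm\sqrt{n/2}\,a$ kills the first-order term and leaves $(v^{2}-u^{2})/a^{2}+o(1)$, which combined with $e^{u^{2}-v^{2}}$ produces the prefactor $e^{(1-1/a^{2})(u^{2}-v^{2})}$ of \eqref{supkernel}. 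The finite eigenvalues $a_{l}$ keep their rational factor, and the remaining $b_{m}=\sqrt{n/2}\,a_{m}$ are absorbed into this same expansion.

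The \emph{critical} regime (ii) is the substantive one. Writing $a=(1-\tau/(2\sqrt{n}))^{-1}$ one has $1-1/a^{2}=\tau/\sqrt{n}+O(n^{-1})$, so the quadratic coefficient in the Taylor expansion of $\Phi_{n}$ at $u=0$ is of order $n^{-1/2}$ and the quartic term must be retained. Using
\[
\log\bigl(1-2u^{2}/(na^{2})\bigr)=-\tfrac{2u^{2}}{na^{2}}-\tfrac{2u^{4}}{n^{2}a^{4}}-\cdots
\]
and rescaling $u=(n^{1/4}/\sqrt{2})\hat{u}$, $v=(n^{1/4}/\sqrt{2})\hat{v}$, the bulk phase converges to the Pearcey quartic $\tfrac{\tau}{2}(\hat{u}^{2}-\hat{v}^{2})-\tfrac{1}{4}(\hat{u}^{4}-\hat{v}^{4})$. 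The finite-rank eigenvalues set at the matching scale $b_{j}=2^{-1/2}n^{1/4}a_{j}$ contribute precisely the rational factor $\prod_{j=1}^{p}(\hat{u}-a_{j})/(\hat{v}-a_{j})$, while the macroscopic $b_{l}=\sqrt{n/2}\,a_{l}$ with $a_{l}>0$ escape to infinity and factor out. The descent contours are dictated by the quartic: $\textup{Re}(-\hat{v}^{4}/4)\to-\infty$ along $\arg\hat{v}\in\{\pm\pi/4,\pm 3\pi/4\}$, producing $\Sigma_{+}\cup\Sigma_{-}$, and the prescribed left/right positions of the $a_{j}$ relative to $\Sigma_{\pm}$ are exactly what is needed to deform $\mathcal{C}_{\textbf{b}}$ onto $\Sigma_{-}\cup\Sigma_{+}$ without crossing any pole. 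Convergence of the weight functions follows from expanding $\exp\{x^{2}/S^{2}-2xu/S\}$ in the defining contour integral for $f_{M}$: the $x^{2}/S^{2}$ piece is negligible, the $-2xu/S$ piece is $O(1)$ after rescaling, and the remainder is the Mellin--Barnes form of $G^{1,0}_{0,M+1}(\,\cdots\,|\,x\hat{u})$; an analogous computation on $g_{M}$ yields $G^{M+1,0}_{0,M+1}(\,\cdots\,|\,-y\hat{v})$.

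I expect the principal difficulty to be the global contour deformation and tail control in case (ii): the poles $\pm\sqrt{n/2}\,a$ approach the Pearcey window from absolute distance $O(\sqrt{n})$, which must be tracked relative to the $n^{1/4}$-scale of the saddle merger, while simultaneously $\mathcal{L}$ is sent to $i\mathbb{R}$ and $\mathcal{C}_{\textbf{b}}$ is replaced by $\Sigma_{-}\cup\Sigma_{+}$. Sufficient absolute integrability must then be established---using the bound $|f_{M}(x,u)|\leq C(x)e^{|xu|}$, the analyticity of $g_{M}$ in its second argument, and the quartic decay of the Pearcey phase---to pass the limit inside the double integral. Once these estimates are in place, the identification of the limit kernel is a routine matter of steepest descent together with the Mellin--Barnes representation of the Meijer $G$ functions.
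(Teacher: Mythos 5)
Your overall strategy for parts (ii) and (iii) tracks the paper's proof closely: in (ii) the Pearcey rescaling $u=(n^{1/4}/\sqrt 2)\hat u$, the retention of the quartic term, and the placement of $\Sigma_\pm$ along $\arg\hat v\in\{\pm\pi/4,\pm 3\pi/4\}$ are exactly the key steps; in (iii) the $n_0\log\frac{1-2u^2/(na^2)}{1-2v^2/(na^2)}\to \frac{v^2-u^2}{a^2}$ expansion producing the prefactor $e^{(1-1/a^2)(u^2-v^2)}$ is correct. (Two minor imprecisions there: the macroscopic $b_m=\sqrt{n/2}\,a_m$ are not ``absorbed into the same expansion''---their rational factor tends to $1$---and one still has to split the $v$-contour $\mathcal C_{\mathbf b}$ into a fixed piece $\mathcal C_{\mathbf a}$ and pieces around the escaping poles, then show the latter vanish by Cauchy's theorem since the limiting integrand has no pole there.)

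The genuine gap is in part (i). You propose to derive the limit by a second-order (Gaussian) expansion at the saddles $z_\pm=\pm i\sqrt{1-a^2}$ and then ``evaluate the Gaussian in $u$.'' But a saddle-point approximation of a double integral of the form $\int\!\int e^{n(h(u)-h(v))}\frac{\cdots}{u-v}$ with coincident saddles does \emph{not} produce a single contour integral of the Mellin--Barnes type; it produces a contribution localized near $u=v=z_\pm$ involving the $1/(u-v)$ singularity, which is precisely what one must show is \emph{negligible}. The limit kernel $\mathcal K^{(\mathrm{sub})}$ in \eqref{subcritical} is a single contour integral over a path from $-i$ to $i$, and it arises by a different mechanism: deform $\mathcal L$ to the union of $i\mathbb R$ and a closed loop $\tilde{\mathcal C}_R$ passing through $\pm i\sqrt{1-a^2}$ and encircling the poles $\pm a, a_1,\dots,a_r$; the part of the double integral with $u\in\tilde{\mathcal C}_R$, $v\in\mathcal C_{R,0}$ is evaluated \emph{exactly} by the residue at $u=v$, giving the single integral $\frac{1}{2\pi i}\int_{\mathcal C_{R,0}} f_M\,g_M\,dv$, whose $n\to\infty$ limit (using the Mellin--Barnes degeneration you invoke) is $\mathcal K^{(\mathrm{sub})}$. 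The steepest-descent machinery is used only to prove that the \emph{remaining} pieces (principal-value integral over $i\mathbb R\times\mathcal C$, and the part of $v$ on the left arc) are $O(n^{-1/2})$ and exponentially small respectively. Your proposal inverts this: you would be trying to extract the limit from the very piece that the paper discards. Relatedly, the constraint $a\le\sqrt 2/2$ is not merely ``contracting $\mathcal C_{\mathbf b}$ onto the descent path without crossing the saddles''; it is the precise hypothesis of Proposition~\ref{contoursub}(i), which guarantees that $\mathrm{Re}\,h$ attains its global minimum on $\mathcal C_{L,0}\cup\mathcal C_{R,0}$ at $\pm i\sqrt{1-a^2}$ (via the sign of $(t+a)(4at+2a^2+1)$ on $t\in[-a,1]$); without this the error bounds for $I_{11}$, $I_{12}$ fail.
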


\begin{remark}
 We believe part (i) of Theorem \ref{transition}   holds  true whenever $a\in [0,1)$, as in the GUE ensemble  with external source; see e.g. \cite{ABK05}.  The  reason that we impose restrictions  on $a$  is mainly  because of the choice of contours. If we could remove the restriction stated in   Proposition \ref{contoursub} of Sect. \ref{sect:hardlimits}, then part (i) of Theorem \ref{transition}   holds true too.
  
  \end{remark}

The rest of the article is organised as follows. 
In Section~\ref{sect:PDF&kernel}, we  derive  the eigenvalue PDF for the product~\eqref{W1} as a bi-orthogonal ensemble and give an  explicit  double integral for correlation kernel.  
The scaling  limits are at the origin  are proved in Section~\ref{sect:hardlimits}.  Finally, in Section \ref{sect:density}  we give some discussion on   the global density.% and address a few relevant questions. 

\section{Eigenvalue PDF and double integral for correlation kernel} \label{sect:PDF&kernel}

%\subsection{Proof of  Proposition \ref{pdfGUEthm} }
%{Hermitised matrix product with shifted mean GUE}
%An $n\times n$ GUE matrix  refers  to the probability measure  on $\mathbb{R}^{n^2}$ with density 
%\begin{equation}\label{meanGUE}
%2^{\frac{1}{2}n(n-1)} \pi^{-\frac{1}{2}n^2}e^{-\mathrm{tr}(H-\frac{1}{2}B)^2}, 
%\end{equation} 
%where $B$ is a deterministic Hermitian matrix of size $n\times n$.  Let  $b_{1}, \ldots, b_n$ denote the eigenvalues of $B$, for convenience of later use. 

 With Lemma 2 of \cite{FIL17}  at hand, we are immediately ready to write down the eigenvalue  PDF   for the product~\eqref{W1} and thus give a proof of Proposition \ref{pdfGUEthm}.

\begin{proof}[Proof of  Proposition \ref{pdfGUEthm}]  It is sufficient to derive the eigenvalue PDF of  the product  $H (G_1 \cdots G_M)(G_1 \cdots G_M)^{*}$. 
Since  the product $(G_1 \cdots G_M)(G_1 \cdots G_M)^{*}$ is only  involved, equivalently, we   can  suppose that each $G_j$ is an $n\times n$ random  matrix with density proportional to  ${\det\!{(G_{j}^* G_{j})}}^{\nu_j} \exp\{-\tr(G_{j}^* G_{j})\}$  according to   the results from   \cite{AIK13,IK14}.  It is well-known that the eigenvalue PDF of an $n\times n$ GUE matrix  with an external source is given by~\eqref{PDF-meanGUE} with $M=0$, i.e. $g_{0}(y, v)=\exp\{-y^2+2yv\}$ (see e.g.  \cite{Jo01} or \cite{Fo10}). Note that  when  $G$ is a square matrix and is distributed as ${\det\!{(G^* G)}}^{\nu} \exp\{-\tr(G^* G)\}$ up to a normalisation constant,  Theorem 1  of \cite{FIL17} holds true,  so does Lemma  2 of \cite{FIL17}. We thus complete the proof   after repeating the lemma $M$ times. 
\end{proof}

%\subsection{Correlation kernel}

 Next,  we settle down to the derivation of double contour integrals for correlation kernel of  the bi-orthogonal ensemble \eqref{PDF-meanGUE}.

\begin{proof} [Proof of Theorem \ref{kernelmeanGUE} ]  First, we need to compute the  moment matrix $B_n=(b_{i,j})_{i,j=}^{n}$ via Hermite polynomials  and their integral representations given  by 
\be 
H_{m}(z):=(-1)^{m} e^{z^2} \frac{d^{m}z}{dz^{m}}e^{-z^2}=\frac{1}{\sqrt{\pi} }  \int_{-\infty}^{\infty}(2ix)^{m}  e^{-(x+iz)^2} dx, 
\ee
and get  
\begin{equation}b_{k,\ell}:=\int_{-\infty}^\infty x^{k-1}g_{M}(x, b_\ell)dx =\sqrt{\pi}(2i)^{-k+1} e^{b_{\ell}^{2}}H_{k-1}(ib_{\ell})  \prod_{m=1}^M\Gamma(\nu_{m}+k). \end{equation} Let $C_n=(c_{k,l})$ be the inverse of  $B_n$, then the correlation kernel for the bi-orthogonal ensemble  \eqref{PDF-meanGUE}  can be rewritten as a  summation 
\begin{equation}\label{meankernel}
K_n(\textbf{b};x,y)=\sum_{k,\ell=1}^{n} c_{\ell,k}\,x^{k-1} g_{M}(y, b_\ell), 
\end{equation}
see e.g. \cite[Proposition 2.2]{Bo99}.  

The entries $c_{i,j}$ of $C_n$  satisfy  the relation  $\sum_{k=1}^{n}   c_{j,k} b_{k,\ell} =\delta_{j,\ell}$,  which we specify for  
\begin{equation}   \sum_{k=1}^{n} \sqrt{\pi}(2i)^{-k+1} e^{b_{\ell}^{2}}H_{k-1}(ib_{\ell}) \prod_{m=1}^M\Gamma(\nu_{m}+k)  \   c_{j,k}=\delta_{j,\ell}, \quad j, \ell=1, \ldots, n. \label{psum}\end{equation}
Without loss of generality,  we assume that $b_{1}, \ldots, b_n$ are pairwise distinct. The above equations immediately imply
\begin{equation}    \sum_{k=1}^{n} \sqrt{\pi}(2i)^{-k+1} H_{k-1}(i u) \prod_{m=1}^M\Gamma(\nu_{m}+k)  \   c_{j,k}=e^{-b_{j}^{2}}\prod_{l=1,l\neq j}^{n}  \frac{u-b_{l}}{b_{j}-b_{l}}   \label{sumeq}.\end{equation}
These  can be verified by noting that both sides  are polynomials of degree $n-1$ in $u$ and take the same values at $n$ different points  since    \eqref{psum}  holds true.

Using these implicit formulas for $\{c_{j,k}\}$ we are ready to   show that  \eqref{meankernel}  implies the double contour integral
formula \eqref{integralkernel}.  Use the integral representations  
\begin{equation} \frac{1}{\Gamma(l)}= \frac{1}{ 2\pi i }\int_{\mathcal{C}_{0}} s^{-l} e^{s}ds,  \qquad  (2iz)^{k-1}=\frac{1}{\sqrt{\pi} i}  \int_{\mathcal{L}}  e^{(u-z)^2}H_{k-1}(iu)du,  \end{equation}
where $\mathcal{L}$ is a path from $-i\infty$ to $i\infty$,  combine   the identity   \eqref{sumeq} and  we  rewrite
 \begin{align} \sum_{k=1}^{n} x^{k-1} c_{k,\ell}  &=\int_{\mathcal{C}_{0}}\frac{d s_M}{2\pi i} \cdots  \int_{\mathcal{C}_{0}}\frac{d s_M}{2\pi i}
   \prod_{l=1}^M  s_{l}^{-\nu_l-1}e^{s_l} \sum_{k=1}^n \,   \Big(\frac{2ix}{s_1\cdots s_M}\Big)^{k-1}   \nonumber\\ &\quad  \times (2i)^{-k+1} \prod_{m=1}^M\Gamma(\nu_{m}+k)  \,   c_{j,k} \nonumber\\
 &= \frac{1}{\pi i}  \int_{\mathcal{L}}   f_M(x,u)   e^{u^2-b_{\ell}^2}
     \prod_{l=1,l\neq \ell}^{n}  \frac{u-b_{l}}{b_{\ell}-b_{l}}   
   \end{align}
where we  have exchanged the order of integration and used  the definition  of  $f_M(x,u) $ \eqref{function1}.

  Finally,  recognising the  summation in \eqref{meankernel} over $\ell$ as the  summation  of the residues at $b_{1}, b_{2}, \ldots, b_n$  for the $v$-function 
 \be g_M(y,v) e^{-v^2}
  \frac{1}{u-v} \prod_{l=1}^{n}  \frac{u-b_{l}}{v-b_{l}}  \ee
  and using Cauchy residue theorem, we  thus arrive at   the formula  \eqref{integralkernel} by  choosing  two disjoint contours   $\mathcal{L}$ and $\mathcal{C}_{\textbf{b}}$. 
\end{proof}

\section{Scaling limits at the origin} \label{sect:hardlimits}

In this section we prove part (i), (ii) and (iii)  of Theorem \ref{transition}  in turn. 

\begin{proof}[Proof of Theorem \ref{transition}: part  (i)]
By the assumptions on $b_1, \ldots, b_n$, substitute $u, v$  by  $\sqrt{n/2}u$, $\sqrt{n/2}v$ respectively in \eqref{integralkernel} and  we obtain  for $\xi=x/\sqrt{1-a^2}$ and $\eta=y/\sqrt{1-a^2}$
  \begin{multline}   \frac{1}{\sqrt{2n}}K_n\Big(\textbf{b};\frac{\xi}{\sqrt{2n}}, \frac{\eta}{\sqrt{2n}}\Big)= \frac{1}{(2\pi i)^2}\int_{\mathcal{L}} du \int_{\mathcal{C}} d v  
%\big(\frac{\xi}{\sqrt{2n}}, \sqrt{\frac{n}{2}}u\big) g_{M}\big(\frac{\xi}{\sqrt{2n}}, \sqrt{\frac{n}{2}}u\big) 
 f_{M}(\xi/\sqrt{2n},\sqrt{n/2}u)  g_{M}(\eta/\sqrt{2n},\sqrt{n/2}v) 
 \\ \times   \,  e^{n(h(u)-h(v))} \frac{1}{u-v} \Big(\frac{u^{2}-a^{2}}{v^{2}-a^2} \Big)^{-r/2}\prod_{l=1}^r \frac{u-a_l}{v-a_l}, \label{rescalingkernel}
     \end{multline}
     where    $\mathcal{C}$ encircles   $\pm a, a_1, \ldots, a_r$ and $\mathcal{L}$ is a path from $-i\infty$ to $i\infty$, and  the phase function 
     \be h(z)=\frac{1}{2}z^2 +\frac{1}{2}\log(a^{2}-z^{2}). \ee
     
     Since  
     
     \be h'(z)=z-\frac{z}{a^{2}-z^{2}},\ee
     we easily know that the equation $h'(z)=0$ has  three solutions 
     \be z_0=0, \qquad z_{\pm}=\pm i\sqrt{1-a^{2}}, \ee
     from which we distinguish  three scenarios:  (i)  $0\leq a<1$;   (ii)  $a=1$;  (iii)  $a>1$. When $a=1$,  the three simple  saddle points  coalesce into  a third-order point  at zero and thus this is a critical case. 
          
Although both the functions $ f_M$ and $g_M$ in the integrand  of \eqref{rescalingkernel} depend on $n$, we will see below  that for the large $n$ they do not enter the saddle point equation.   So we may perform saddle-point approximations and this is what we will do next in details.

    In order to investigate the case (i) with $0\leq a<1$,  we  first  proceed  to consider the situation  $\eta<0$. For this,  we need to deform the integral contours as follows. Given $\delta\geq 0$, let's first define  $\mathcal{C}_{R,\delta}$ as a great arc  along the circle with radius $\sqrt{1+\delta^{2}+2a\delta}$ and centre at  $a+\delta$, which is entirely   in the right-half plane and connects the two points  $-i\sqrt{1-a^2}$  and  $i\sqrt{1-a^2}$.   Let  $\mathcal{C}_{L,\delta}$ be the reflection of $\mathcal{C}_{R,\delta}$ about the $y$-axis.  Choose $\mathcal{C}=\mathcal{C}_{L,0}\cup \mathcal{C}_{R,0}$ and deform $\mathcal{L}$ as the union of the $y$-axis and $ \tilde{\mathcal{C}}_{R}:=\mathcal{C}_{R,0.1}\cup\{(0,y): |y|\leq \sqrt{1-a^{2}}\}$, with $\tilde{\mathcal{C}}_{R}$ in a  counterclockwise direction and the $y$-axis  from $-i\infty$ to $i\infty$. 
    Note the assumption on $a_1, \ldots, a_r$,  such a choice assures that $ \tilde{\mathcal{C}}_{R}$ encircles $\pm a, a_1, \ldots, a_r$. Divide the integration over  $\mathcal{L}$  into two parts,  we further rewrite  the double   integral on the RHS of  \eqref{rescalingkernel} as a sum of two integrals \begin{align}  \frac{1}{\sqrt{2n}}K_n\Big(\textbf{b};\frac{\xi}{\sqrt{2n}}, \frac{\eta}{\sqrt{2n}}\Big)&=
 \textrm{P.V.}\int_{i\mathbb{R}} du \int_{\mathcal{C}} dv \Big(\cdot\Big)+ \int_{ \tilde{\mathcal{C}}_{R}}du \int_{\mathcal{C}} dv  \Big(\cdot\Big) :=I_{1}+I_{2}.
   \label{twointegrals}\end{align}
  Here the notation $\textrm{P.V.}$ denotes the Cauchy principal value  integral.  
  
     As   $n\rightarrow \infty$,  we claim that   the integral over the range of $v\in \mathcal{C}_{R,0}$  and $u\in \tilde{\mathcal{C}}_{R}$ gives rise to a leading  contribution  to  the double   integral on the RHS of \eqref{rescalingkernel}. Actually, for $I_2$, when $v\in \mathcal{C}_{L,0}$  the $u$-integral vanishes by Cauchy's theorem since the integrand does not have any singularity inside $\tilde{\mathcal{C}}_{R}$, while for $v\in \mathcal{C}_{R,0}$   application of the residue theorem shows 
   \begin{align}   &I_2= \frac{1}{2\pi i} \int_{\mathcal{C}_{R,0}} d v  
 f_{M}(\xi/\sqrt{2n},\sqrt{n/2}v) g_{M}(\eta/\sqrt{2n},\sqrt{n/2}v). \label{I2form}
  \end{align}

Consideration of the definition \eqref{function1} permits us to  get as $n \rightarrow \infty$ 
  \be    f_{M}(\xi/\sqrt{2n},\sqrt{n/2}v)  \sim  \sum_{k=0}^{\infty} \frac{(-\xi v)^k}{k!}\prod_{l=1}^{M}\frac{1}{\Gamma(\nu_l+1+k)}, \label{1fasymtotics}\ee
 the RHS of which is recognized as a Meijer G-function  via  \be  \sum_{k=0}^{\infty} \frac{(-z)^k}{k!}\prod_{l=1}^{M}\frac{1}{\Gamma(\nu_l+1+k)}=G^{1,0}_{0,M+1} \Big ({ \underline{\hspace{0.5cm}}
 \atop 0,-\nu_1, \dots,-\nu_M} \Big |  z \Big ). \label{hyper-Grelation}\ee Here  the notation $f_{1,n} \sim f_{2,n}$  means that $\lim_{n \to \infty} f_{1,n}/f_{2,n} = 1$. 
 
 However,  in order to obtain the leading asymptotic behaviour of  $g_{M}(\eta/\sqrt{2n},\sqrt{n/2}v)$,  we need to derive   a Mellin-type integral representation of $g_{M}(y,v)$ for $(y, v)  \in  \mathbb{R}\setminus \{0\} \times  \mathbb{C}$
 \begin{align} g_{M}(y,v) 
 =\frac{e^{v^{2}/2}}{2\pi i} \int_{c-i\infty}^{c+i\infty} ds\, \big(\sqrt{2}|y|\big)^{-s}   U\big(s-\frac{1}{2},-\sqrt{2}\sgn(y) v\big)\, \prod_{l=0}^{M} \Gamma(\nu_l+s), %  \Big(\frac{1}{\sqrt{\pi}}  \int_{0}^{\infty}  t^{s-1}e^{-t^{2}+\sgn(y) vt} dt\Big)
  \label{function2}\
  \end{align}
  where $c>0$ and the parabolic cylinder function 
 \begin{equation} U(c,z)= \frac{e^{-\frac{1}{4}z^2}}{\Gamma(c+\frac{1}{2})}\int_{0}^{\infty}  t^{c-\frac{1}{2}}e^{-\frac{1}{2}t^{2}-zt} dt, \qquad \textup{Re}(c)> -\frac{1}{2}. \label{Ufun} \end{equation}
 This can   be proved from  \eqref{weight}  by  applying Mellin  and inverse  Mellin transforms  if  $ g_{M}(y,v)$  is treated as a function of the variable $y$ over $(0, \infty)$. When   $y \in(-\infty, 0)$, we just turn to  consider the variable $-y>0$.  
 
  Using  asymptotic expansion  of the parabolic cylinder function \eqref{Ufun} as $z\to \infty$ (see e.g. \cite[Sect. 12.9]{Ol10})
   \be  U(c,z) =\begin{cases} z^{-c-\frac{1}{2}}e^{-\frac{1}{4}z^2}\Big(1+\mathcal{O}(\frac{1}{z^2})\Big), & |\textup{ph}(z)|<\frac{3}{4}\pi, \\
   \frac{1}{\Gamma(c+\frac{1}{2})} \sqrt{2\pi} (-z)^{c-\frac{1}{2}}e^{\frac{1}{4}z^2}\Big(1+\mathcal{O}(\frac{1}{z^2})\Big), & \frac{3}{4}\pi<\textup{ph}(z)<\frac{5}{4}\pi, \label{asyU}\end{cases}\ee
 we have 
\be    g_{M}(\eta/\sqrt{2n},\sqrt{n/2}v)  \sim   \frac{1}{2\pi i}\int_{c-i\infty}^{c+i\infty} ds\, \big(|\eta|v\big)^{-s}   \, \prod_{l=0}^{M} \Gamma(\nu_l+s)=G^{M+1,0}_{0,M+1} \Big ({ \underline{\hspace{0.5cm}}
 \atop 0, \nu_1, \dots,\nu_M} \Big |  |\eta| v \Big ). \label{2fasymtotics}\ee 
  Combining \eqref{I2form}, \eqref{1fasymtotics} and \eqref{2fasymtotics},  after a change of variables we see that  $I_2$ leads to the limiting kernel in part (ii).

 Next,  we   deal with the integral $I_1$ and show that it is negligible compared to $I_2$. In this case because of different asymptotic forms of $g_M$, we divide $I_1$ into two parts again as
 \begin{align}   I_1= \textrm{P.V.}\int_{i\mathbb{R}} du \int_{\mathcal{C}_{1,+}} dv \Big(\cdot\Big)   +\int_{i\mathbb{R}} du \int_{\mathcal{C}_{1,-}} dv \Big(\cdot\Big):=I_{11}+I_{12},
   \label{twosubintegrals}\end{align}
 where $\mathcal{C}_{1,+}= \{z\in \mathcal{C}_{L,0}\cup \mathcal{C}_{R,0}: |\textup{ph}(z)|<\frac{3}{4}\pi \}$ and 
 $\mathcal{C}_{1,-}= \{z\in \mathcal{C}_{L,0}: \frac{3}{4}\pi<\textup{ph}(z)<\frac{5}{4}\pi\}$.

 When  $0\leq a\leq \sqrt{2}/2$, Proposition \ref{contoursub} below shows that $\textrm{Re}\{h(z)\}$   attains its global   minimum at $\pm i\sqrt{1-a^2}$ over  $\mathcal{C}_{L,0}\cup \mathcal{C}_{R,0}$,  and     attains its global maximum at $\pm i\sqrt{1-a^2}$ over $i\mathbb{R}$.   Therefore,  for $I_{11}$, combining  \eqref{function1}, \eqref{function2} and \eqref{asyU} we obtain
    \begin{multline}    I_{11}\sim \textrm{P.V.}   \frac{1}{(2\pi i)^2} \int_{i\mathbb{R}} du \int_{\mathcal{C}_{1,+}} dv  
 e^{n(h(u)-h(v))} \frac{1}{u-v} \Big(\frac{u^{2}-a^{2}}{v^{2}-a} \Big)^{-r/2}\prod_{l=1}^r \frac{u-a_l}{v-a_l} \\ \times
   G^{1,0}_{0,M+1} \Big ({ \underline{\hspace{0.5cm}}
 \atop 0,-\nu_1, \dots,-\nu_M} \Big |  \xi u \Big ) G^{M+1,0}_{0,M+1} \Big ({ \underline{\hspace{0.5cm}}
 \atop 0,\nu_1, \dots, \nu_M} \Big |  |\eta| v  \Big ).
 \end{multline}
   For this,   the standard steepest  descent argument  shows that  the   leading  term  for the integral $I_{11}$  comes from the neighbourhood of the saddle points $(z_{+}, z_{+})$ and $(z_{-}, z_{-})$ and can be estimated by  
    \be I_{11}=\mathcal{O}(\frac{1}{\sqrt{n}}). \label{I11term}\ee
    
     Similarly, for  $I_{12}$,  combination of \eqref{function1}, \eqref{function2} and \eqref{asyU}  gives  rise to 
    \begin{align}    &I_{12}\sim      \frac{1}{(2\pi i)^2} \int_{i\mathbb{R}} du \int_{\mathcal{C}_{1,-}} dv 
  \Big(\frac{u^{2}-a^{2}}{v^{2}-a} \Big)^{-r/2}\prod_{l=1}^r \frac{u-a_l}{v-a_l} G^{1,0}_{0,M+1} \Big ({ \underline{\hspace{0.5cm}}
 \atop 0,-\nu_1, \dots,-\nu_M} \Big |  \xi u \Big )\nonumber  \\ &\times
  % G^{1,0}_{0,M+1} \Big ({ \underline{\hspace{0.5cm}} \atop 0,-\nu_1, \dots,-\nu_M} \Big |  \xi u \Big )    
   \frac{e^{n(h(u)-h(i\sqrt{1-a^2}))}}{u-v}  \int_{c-i\infty}^{c+i\infty} \frac{ds}{\sqrt{2\pi} i} \, |\eta|^{-s} (-nv)^{s-1} \sqrt{n}  e^{-\frac{n}{2}(\log(a^2-v^{2})+1-a^{2})}  \prod_{l=1}^{M} \Gamma(\nu_l+s).
 \end{align}
  We claim that the  integrals of $u$ and $v$  respectively afford us  bounds $\mathcal{O}(n^{-\frac{1}{2}})$ and  $\mathcal{O}(n^{c-\frac{1}{2}}e^{-\frac{1}{2}(1-a^2)n})$.  The former  can be obtained via the steepest  descent argument. For the latter,  writing  $v=-a+e^{i\theta}\in \mathcal{C}_{L,0}$,  it is seen  from $\cos\theta \leq a$ that 
  \begin{align}  \textrm{Re}\{\log(a^2-v^{2})\}+1-a^{2}=\frac{1}{2}\log(1+4a^{2}-4a\cos\theta) +1-a^2 \geq 1-a^2. \end{align} Together,  we arrive at an exponential decay estimation
    \be I_{12}=\mathcal{O}(n^{c-1} e^{-n(1-a^2)/2})). \label{I12term}\ee

    Combining  \eqref{I2form}, \eqref{I11term} and \eqref{I12term},     note   \eqref{twointegrals}   and \eqref{twosubintegrals} and  we  complete  the proof  of  part (i) for $\eta<0$.  
  
    The proof in the case of $\eta>0$ is very similar.  But this time we need to deform $\mathcal{L}$ as the union of the $y$-axis and $ \tilde{\mathcal{C}}_{L}:=\mathcal{C}_{L,0.1}\cup\{(0,y): |y|\leq \sqrt{1-a^{2}}\}$ with $\tilde{\mathcal{C}}_{L}$ being  clockwise.   
 
    Finally, it is easily seen that  the previously derived  estimates are valid  uniformly for $\xi, \eta$ in any given compact set of  $\mathbb{R}\setminus\{0\}$.    
\end{proof}

\begin{proof}[Proof of Theorem \ref{transition}: part  (ii)]   
Substituting $u, v$  by  $\sqrt{n/2}u, \sqrt{n/2}v$  in \eqref{integralkernel}, by the assumptions   we obtain  
  \begin{multline}   \frac{1}{\sqrt{2\sqrt{n}}}K_n\Big(\textbf{b};\frac{x}{\sqrt{2\sqrt{n}}}, \frac{y}{\sqrt{2\sqrt{n}}}\Big)= \frac{n^{\frac{1}{4}}}{(2\pi i)^2}\int_{\mathcal{L}} du \int_{\mathcal{C}} d v\, e^{n(h(u)-h(v))} f_{M}\big(\frac{x}{\sqrt{2\sqrt{n}}},\sqrt{\frac{n}{2}}u\big)   
 \\ \times   \,  g_{M}\big(\frac{y}{\sqrt{2\sqrt{n}}},\sqrt{\frac{n}{2}}v\big)   \frac{1}{u-v} \Big(\frac{u^{2}-a^{2}}{v^{2}-a^2} \Big)^{-\frac{r}{2}}\prod_{l=1}^p \frac{n^{\frac{1}{4}}u-a_l}{n^{\frac{1}{4}}v-a_l} \prod_{m=p+1}^r \frac{u-a_m}{v-a_m}, \label{rescalingkernel-2}
     \end{multline}
     where     the phase function 
     \be h(z)=\frac{1}{2}z^2 +\frac{1}{2}\log(a^{2}-z^{2}),  \qquad a=(1-\frac{\tau}{2\sqrt{n}})^{-1}. \ee
     Here if $a$ is equal to the critical value 1, then the  three simple  saddle points  coalesce into  a third-order point    $z_0=0$.

     To  use the steepest  descent method to investigate   asymptotic behaviour of large  $n$,    we  need to choose proper contours according to Propositions  \ref{contoursub} and \ref{contourcrti} below.   For convenience, let's introduce some notations:  1)   $\delta$ is a fixed small  positive number; 2)  $\theta_0$ is  a bit larger than $\pi/4$, say $\theta_0=\frac{101}{400}\pi$,  so that it  satisfies   the condition of part (ii)  in Proposition \ref{contourcrti}; 3)  $q:=1+\max\{2, |a_1|, |a_{2}|, \ldots, |a_r|\}$; 4) $\mathcal{L}_{(x_1,y_1)\rightarrow (x_2,y_2)\rightarrow \cdots}$  denotes  the  union of line segments from points  $(x_1,y_1)$ to $(x_2,y_2)$ to $\cdots$.    Define $\mathcal{C}_{+}=\Gamma_{+,1}\cup  \Gamma_{+,2}$ with an anticlockwise direction 
     where 
     \be   \Gamma_{+,1}= \mathcal{L}_{(\delta \cos\theta_0, \delta \sin\theta_0) \rightarrow (0,0) \rightarrow (\delta \cos\theta_0, -\delta \sin\theta_0)},\ee 
     and 
       \begin{align}   \Gamma_{+,2}&=  \mathcal{L}_{(\delta \cos\theta_0, -\delta \sin\theta_0) \rightarrow (1,-\tan\theta_0)  \rightarrow  (q,-\tan\theta_0) \rightarrow (q,\tan\theta_0)\rightarrow (1,\tan\theta_0)  \rightarrow (\delta \cos\theta_0, -\delta \sin\theta_0)}.\end{align}
 Let  $\mathcal{C}_{-}=\Gamma_{-,1}\cup  \Gamma_{-,2}$ be the reflection  of $\mathcal{C}_{+}$  about  the $y$-axis  with  an anticlockwise direction, and let   $\mathcal{L}$ be the  $y$-axis.   We stress that it might be better to  deform  a small portion of $\Gamma_{+,1}$  near the origin    to the right a little such that it doesn't intersect  the $y$-axis,  however, our choice above  works  well because   they   just touch each other at the ``point of  tangency''.

  First, we divide the integral on the RHS of \eqref{rescalingkernel-2} into two parts
  \be    \textup{LHS \,of\,} \eqref{rescalingkernel-2} %\frac{1}{\sqrt{2\sqrt{n}}}K_n\Big(\textbf{b};\frac{x}{\sqrt{2\sqrt{n}}}, \frac{y}{\sqrt{2\sqrt{n}}}\Big)
  = \int_{\mathcal{L}} du \int_{\Gamma_{+,1}\cup  \Gamma_{-,1}} d v \,\Big(\cdot\Big) + \int_{\mathcal{L}} du \int_{\Gamma_{+,2}\cup  \Gamma_{-,2}} d v \,\Big(\cdot\Big) :=I_{1}+I_{2}.\label{twopartssum-2}
  \ee  
     We claim that  the dominant contribution    comes   from the neighbourhood of   $(0,0)$, so we need to expand the function $h(z)$ at zero. With the double scaling  in mind, we obtain the Taylor series  \begin{align} h(z)
    =\log a+\frac{1}{2}\Big( \frac{\tau }{\sqrt{n}}- \frac{\tau^2 }{4n}\Big)z^2-\frac{1}{4} \Big(1-\frac{\tau}{2\sqrt{n}}\Big)^{4}z^{4}-\frac{1}{6} \Big(1-\frac{\tau}{2\sqrt{n}}\Big)^{6}z^{6}+\cdots, \label{Taylorexpansion}\end{align}
 from which   combining  \eqref{function1},  \eqref{function2} and \eqref{asyU}, together with the relation \eqref{hyper-Grelation}  and the definition of Meijer G-function,  we see that 
  \begin{multline}    I_{1}\sim   
  \frac{n^{\frac{1}{4}}}{(2\pi i)^2}\int_{\mathcal{L}} du \int_{\Gamma_{+,1}\cup  \Gamma_{-,1}} d v\, e^{n(h(u)-h(v))} \ G^{1,0}_{0,M+1} \Big ({ \underline{\hspace{0.5cm}}
 \atop 0,-\nu_1, \dots,-\nu_M} \Big |  n^{\frac{1}{4}}x u \Big )  \\ \times   G^{M+1,0}_{0,M+1} \Big ({ \underline{\hspace{0.5cm}}
 \atop 0,\nu_1, \dots, \nu_M} \Big |  -n^{\frac{1}{4}}yv  \Big )  \frac{1}{u-v} \Big(\frac{u^{2}-a^{2}}{v^{2}-a^2} \Big)^{-\frac{r}{2}}\prod_{l=1}^p \frac{n^{\frac{1}{4}}u-a_l}{n^{\frac{1}{4}}v-a_l} \prod_{l=p+1}^r \frac{u-a_l}{v-a_l}. \label{eqn2-1}
 \end{multline}
 Rescaling $u,v$ by    $n^{-\frac{1}{4}}$,  use   \eqref{Taylorexpansion} and we conclude that  the limit of $I_1$ leads to   the kernel \eqref{critical}, uniformly for  $x, y$ in a compact set  of   $\mathbb{R}\setminus\{0\}$. 

 Secondly,  for the integral $I_2$,   we divide it into two parts again
  \be    I_2= \int_{\mathcal{L}} du \int_{\Gamma_{+,2}} d v \,\Big(\cdot\Big) + \int_{\mathcal{L}} du \int_{\Gamma_{+,2}} d v \,\Big(\cdot\Big):=I_{2,+}+I_{2,-}, \label{twopartssum-22}
  \ee 
 and  show that they are ignorable  compared to $I_1$.   We  just  focus on the integral  $I_{2,-}$  since both are similar.  Because  of different asymptotic behaviour of  \eqref{asyU}, write 
  \be    I_{2,-}=  \int_{\mathcal{L}} du \int_{\Gamma_{-,2}^{(1)}} d v \,\Big(\cdot\Big) + \int_{\mathcal{L}} du \int_{\Gamma_{-,2}^{(2)}} d v 
  \,\Big(\cdot\Big):=I^{(1)}_{2,-}+I^{(2)}_{2,-}, \label{twopartssum-222}
  \ee 
 where $\Gamma_{-,2}^{(1)}=\{z\in \Gamma_{-,2}: |\textup{ph}(z)|<\frac{3}{4}\pi\}$ and $\Gamma_{-,2}^{(2)}=\{z\in \Gamma_{-,2}: \frac{3}{4}\pi<\textup{ph}(z)<\frac{5}{4}\pi\}$.
 
 Application of  \eqref{asyU} gives us the same asymptotic  form as in the RHS of \eqref{eqn2-1}  but with the $v$-contour  $\Gamma_{-,2}^{(1)}$, from which use of the steepest descent argument leads to an exponential decay.   However,    application of  \eqref{asyU}  to  $I^{(2)}_{2,-}$  yields
  \begin{multline}    I^{(2)}_{2,-} \sim  
  \frac{1}{(2\pi i)^3}\int_{\mathcal{L}} du \int_{\Gamma_{2,-}^{(2)}} d v\, e^{n(h(u)-h(0))} \ G^{1,0}_{0,M+1} \Big ({ \underline{\hspace{0.5cm}}
 \atop 0,-\nu_1, \dots,-\nu_M} \Big |  n^{\frac{1}{4}}x u \Big )  \\ \times    \frac{1}{u-v} \Big(\frac{u^{2}-a^{2}}{v^{2}-a^2} \Big)^{-\frac{r}{2}}\prod_{l=1}^p \frac{n^{\frac{1}{4}}u-a_l}{n^{\frac{1}{4}}v-a_l} \prod_{l=p+1}^r \frac{u-a_l}{v-a_l} 
   \\    \times 
 \,e^{-\frac{n}{2}\log(1-\frac{v^2}{a^2})}  \int_{c-i\infty}^{c+i\infty} ds\,  |y|^{-s} (\sgn(y)v)^{s-1} n^{(3s-1)/4}   \prod_{l=1}^{M} \Gamma(\nu_l+s).
 \end{multline}
 Note that   the endpoints of  $\Gamma_{2,-}^{(2)}$  are $(-\tan\theta_0, \pm \tan\theta_0)$,  with $a=(1-\frac{\tau}{2\sqrt{n}})^{-1}$ in mind, for sufficiently large $n$ we see  that       \begin{align}  \textrm{Re}\Big\{\log(1-\frac{v^2}{a^2})\Big\}& \geq-2\log a+ \frac{1}{2}\log\big((a+\tan\theta_0)^2+\tan^{2}\theta_0)(a-\tan\theta_0)^2+\tan^{2}\theta_0) \big) \nonumber\\ &\geq-2\log a+ 2 \log \tan\theta_0>\log \tan\theta_0>0 \end{align} 
holds true uniformly  for $\tau$ in a compact set of $\mathbb{R}$ and for for every $v\in \Gamma_{2,-}^{(2)}$,  use of the steepest descent argument  leads to an exponential decay \be  I^{(2)}_{2,-}=\mathcal{O}\big( n^{ \frac{3c-2}{4}}e^{-\frac{n}{2}\log \tan\theta_0 }\big).\ee

Lastly, by combining the foregoing results for $I_1$ and $I_2$, we  complete the proof of part (ii).
\end{proof}

\begin{proof}[Proof of Theorem \ref{transition}: part  (iii)]    Under  the assumptions   we can  rewrite \eqref{integralkernel} as 
  \begin{multline}  K_n(\textbf{b};x, y)= \frac{1}{2(\pi i)^2}\int_{\mathcal{L}} du \int_{\mathcal{C}} d v\, e^{u^{2}-v^{2}} f_{M}(x,u)  g_{M}(y,v) 
 \\ \times   \,    \frac{1}{u-v}  
 \Big(\frac{1-\frac{2u^2}{n a^{2}}
 }  {      1-\frac{2v^2}{n a^2}      } 
 \Big)^{\frac{n-r}{2}} \prod_{l=1}^p  \frac{u-a_l}{v-a_l} \prod_{m=p+1}^r \frac{u-\sqrt{\frac{n}{2}}a_m}{v-\sqrt{\frac{n}{2}}a_m}.  \label{rescalingkernel-31}
     \end{multline}
    Without loss of generality, we assume that $a_{p+1}, \ldots, a_r>0$.  Choose a fixed number $q$  such that $q>\max\{|a_1|,  \ldots, |a_p|\}$, and let 
    \be \mathcal{C}_{+}=\{z=q+t e^{\pm i \frac{\pi}{16}}:t\geq 0\} \quad \mathrm{and} \quad \mathcal{C}_{-}=\{z=-q+t e^{i\pi (1\pm \frac{15}{16})}: t\geq 0\} \ee
    both with an anticlockwise direction.  Then $\mathcal{C}_{-}$ encircles $ \sqrt{n/2}a$ and  $\mathcal{C}_{+}$ encircles $\sqrt{{n/2}}a$, $\sqrt{n/2}a_{p+1}, \ldots, \sqrt{n/2}a_{r}$, both not crossing  $\mathcal{C}_{\textbf{a}}$. For large $n$,  we choose $\mathcal{C}=\mathcal{C}_{-} \cup \mathcal{C}_{\textbf{a}} \cup  \mathcal{C}_{+}$
and  divide the integral on the RHS of \eqref{rescalingkernel-31} into three parts according to the $v$-contour, denoted by $I_{-},  I_{\textbf{a}},  I_{+}$. 
  
       Note that $a_{m}\neq 0$ for  $m=p+1,  \ldots, r$,  we easily see that the limit of  $I_{\textbf{a}}$ leads to the dominant contribution, while  
       \begin{equation}  I_{\pm} \rightarrow \frac{1}{2(\pi i)^2}\int_{\mathcal{L}} du \int_{\mathcal{C}_{\pm}} d v\, e^{(1-\frac{1}{a^2})(u^{2}-v^{2})} f_{M}(x,u)  g_{M}(y,v)    \,    \frac{1}{u-v}  
\prod_{l=1}^p  \frac{u-a_l}{v-a_l}=0 \label{rescalingkernel-32}
     \end{equation}
     since the integrand has no pole in $\mathcal{C}_{\pm}$  for the $v$-integral.    
       
       This completes the proof of part (iii).
\end{proof}

The following  two propositions are of importance  in choosing appropriate contours of integration  for   the method of steepest descent.

\begin{prop}  \label{contoursub}
 Let   $\mathcal{C}_{R}=\{z=a+e^{i\theta}: -\frac{\pi}{2}-\arccos a \leq \theta\leq \frac{\pi}{2}+\arccos a\}$ and let $\mathcal{C}_{L}$ be the reflection of $\mathcal{C}_{R}$ about the $y$-axis.  Then for $h(z)=\frac{1}{2}z^2 +\frac{1}{2}\log(a^{2}-z^{2})$, the following hold true. 
 
(i) When  $0\leq a\leq \sqrt{2}/2$,  $\textup{Re}\{h(z)\}$   attains its global   minimum at $\pm i\sqrt{1-a^2}$ over  $\mathcal{C}_{L}\cup \mathcal{C}_{R}$.
 
 (ii) When  $0\leq a\leq 1$,  $\textup{Re}\{h(z)\}$    attains its global maximum at $\pm i\sqrt{1-a^2}$ over $i\mathbb{R}$.  
\end{prop}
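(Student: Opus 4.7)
My plan is to reduce each part to a one-dimensional calculus problem on the relevant curve, exploiting the elementary symmetry $h(-z) = h(z)$ together with $\overline{h(z)} = h(\bar z)$, which implies $\mathrm{Re}\{h\}$ is invariant under reflection through the imaginary axis. Consequently it suffices to analyse $\mathcal{C}_R$ for part (i), and the extension to $\mathcal{C}_L$ is automatic.

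For part (ii), I parametrize $z = it$ with $t \in \mathbb{R}$ and compute
\[
\mathrm{Re}\{h(it)\} = -\tfrac{1}{2}t^2 + \tfrac{1}{2}\log(a^2+t^2).
\]
Differentiation yields $t(1-a^2-t^2)/(a^2+t^2)$, so for $a \le 1$ the critical points are $t = 0$ and $t = \pm\sqrt{1-a^2}$. A direct sign analysis identifies $t = 0$ as a local minimum (value $\log a$) and $t = \pm\sqrt{1-a^2}$ as local maxima (value $-(1-a^2)/2$); since $\mathrm{Re}\{h(it)\}\to -\infty$ as $|t|\to\infty$, the global maximum is attained at one of these two types. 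Comparing values, $f(a) := -(1-a^2)/2 - \log a$ satisfies $f(1) = 0$ and $f'(a) = (a^2-1)/a < 0$ on $(0,1)$, so $f(a) > 0$ and the global maximum lies at $t = \pm\sqrt{1-a^2}$.

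For part (i), I parametrize $\mathcal{C}_R$ by $z = a + e^{i\theta}$. The identity $a^2 - (a+e^{i\theta})^2 = -e^{i\theta}(2a + e^{i\theta})$ gives $|a^2-z^2|^2 = 4a^2 + 4a\cos\theta + 1$, so $\mathrm{Re}\{h(z)\}$ depends on $\theta$ only through $t := \cos\theta$:
\[
\mathrm{Re}\{h(a+e^{i\theta})\} = g(t), \qquad g(t) := \tfrac{1}{2}(a^2+2at+2t^2-1) + \tfrac{1}{4}\log(4a^2+4at+1),
\]
with $t \in [-a, 1]$ on the arc and $t = -a$ corresponding to the endpoints $z = \pm i\sqrt{1-a^2}$. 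A short algebraic manipulation then produces the crucial factorization
\[
g'(t) = \frac{2(a+t)(2a^2+4at+1)}{4a^2+4at+1}.
\]
The denominator is positive on $[-a,1]$, the factor $a+t$ is nonnegative there with equality only at $t=-a$, and the quadratic factor $2a^2+4at+1$ has its unique real root at $t_\ast = -(2a^2+1)/(4a)$. A direct check shows $t_\ast \le -a$ if and only if $a \le \sqrt{2}/2$, so under this restriction $g'(t) \ge 0$, $g$ is monotone nondecreasing on $[-a,1]$, and its minimum is at $t = -a$. The statement on $\mathcal{C}_L$ follows from the reflection symmetry of $\mathrm{Re}\{h\}$.

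The step I expect to be the main obstacle is the clean factorization of $g'(t)$: it is exactly this factorization that pinpoints the threshold $a = \sqrt{2}/2$, beyond which an interior critical point of $g$ enters $(-a,1)$ and the monotonicity argument breaks. This matches the paper's remark that the restriction $a \le \sqrt{2}/2$ is imposed purely for the choice of contour; any extension to larger $a$ would require deforming the arc or making a separate comparison against values at the new interior extremum rather than refining the present estimate.
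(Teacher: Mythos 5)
Your proof is correct and follows essentially the same route as the paper's: parametrize $\mathcal{C}_R$ by $z=a+e^{i\theta}$, reduce $\mathrm{Re}\{h\}$ to a function of $t=\cos\theta$ on $[-a,1]$, obtain the same factorization of the $t$-derivative, and observe that the factor $2a^2+4at+1$ stays nonnegative on $[-a,1]$ precisely when $a\le\sqrt2/2$; part (ii) is likewise the identical one-variable computation. The only cosmetic quibbles are that $2a^2+4at+1$ is linear in $t$ rather than ``quadratic'', and the comparison of the value at $t=0$ (a local minimum) against $-(1-a^2)/2$ in part (ii) is redundant once you have identified $t=0$ as a local minimum and $\pm\sqrt{1-a^2}$ as the only local maxima of a function tending to $-\infty$.
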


\begin{proof}
For  (i),  we first consider   $z\in \mathcal{C}_{R}$ and let $z=a+e^{i\theta}$. It is easy to obtain 
\be \textup{Re}\{h\}=\frac{1}{2}(2t^{2}+2at+a^2 -1)+\frac{1}{4}\log(4at+1+4a^{2}), \qquad t=\cos\theta.\ee
For this, we know from $t\in [-a,1]$  with $0\leq a\leq \sqrt{2}/2$ that \be \frac{d}{dt} \textup{Re}\{h\} =\frac{2}{4at+1+4a^{2}}(t+a)(4at+2a^{2}+1))\geq 0,\ee
and thus prove (i). The proof in the case  $z\in \mathcal{C}_{L}$ is similar.  

For (ii), let $z=iy$, we have 
\be \textup{Re}\{h\}=-\frac{1}{2} y^{2}+\frac{1}{2}\log(a^{2}+y^2), \qquad -\infty<y<\infty.\ee
Since \be \frac{d}{dt} \textup{Re}\{h\} =\frac{y}{a^{2}+y^2}(1-a^{2}-y^{2}),\ee
the maximum can be obtained at $y=\pm \sqrt{1-a^2}$.
 \end{proof}
 
 \begin{prop}  \label{contourcrti}
%Given that  a real  number $\theta_0$ satisfies  $ \frac{\pi}{8} \leq |\theta_0| \leq \frac{\pi}{4}$ or $\frac{\pi}{8} \leq |\pi-\theta_0| \leq \frac{\pi}{4}$,   let  $\mathcal{L}_{\theta_0}=\{z=\sqrt{t}e^{i\theta_0}: t\geq 0\}$.  Then for $h(z)=\frac{1}{2}z^2 +\frac{1}{2}\log(1-z^{2})$,   $\textup{Re}\{h(z)\}$  attains its global   minimum at $t=0$ over  $\mathcal{L}_{\theta_0}$.  
Let   $h(z)=\frac{1}{2}z^2 +\frac{1}{2}\log(1-z^{2})$, for $\theta_0 \in \mathbb{R}$  the following hold true. 

(i)  When  $ \frac{\pi}{8} \leq |\theta_0| \leq \frac{\pi}{4}$,    $\textup{Re}\{h(te^{i\theta_0})\}$ is a strictly increasing  function of $t$ over  $[0, \infty)$.

(ii)  When  $ \frac{\pi}{4} < |\theta_0| \leq \frac{\pi}{2}-\frac{1}{2}\arccos\frac{2-\sqrt{2}}{2}$,    $\textup{Re}\{h(te^{i\theta_0})\}$ is a strictly increasing  function of $t$ over $[0, t_{max})$ with $t_{max}=\sqrt{2\cos 2\theta_0 -1/\cos 2\theta_0}$. Moreover,   $t_{max} \cos\theta_0\geq 1$.

(iii)  For $0\neq y\in \mathbb{R}$, $\textup{Re}\{h(x+iy)\}$   is a strictly increasing  function of $x$ over $[1,\infty)$. 

(iv) When $x\geq 2$, $\textup{Re}\{h(x+iy)\}$   is a strictly decreasing (resp. inceasing) function of $y$ over $[0,\infty)$ (resp. $(-\infty,0]$).

\end{prop}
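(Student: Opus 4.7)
The key observation driving everything is the closed form
\begin{equation*}
h'(z) \;=\; z - \frac{z}{1-z^{2}} \;=\; \frac{z^{3}}{z^{2}-1},
\end{equation*}
so in all four parts the sign analysis reduces to studying a single rational function. My plan is to treat the two parametrisations ($z=te^{i\theta_{0}}$ and $z=x+iy$) separately and in each case reduce to an elementary real inequality.

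For parts (i) and (ii), I would use $\frac{d}{dt}\textrm{Re}\{h(te^{i\theta_{0}})\}=\textrm{Re}\{e^{i\theta_{0}}h'(te^{i\theta_{0}})\}$. A direct calculation, multiplying by the conjugate of $t^{2}e^{2i\theta_{0}}-1$, gives
\begin{equation*}
\frac{d}{dt}\textrm{Re}\{h(te^{i\theta_{0}})\} \;=\; \frac{t^{3}\bigl(t^{2}\cos 2\theta_{0}-\cos 4\theta_{0}\bigr)}{\bigl(t^{2}-\cos 2\theta_{0}\bigr)^{2}+\sin^{2}2\theta_{0}},
\end{equation*}
whose denominator is strictly positive. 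For (i), both $\cos 2\theta_{0}\ge 0$ and $-\cos 4\theta_{0}\ge 0$ on $[\pi/8,\pi/4]$, so the numerator is strictly positive for $t>0$. For (ii), $\cos 2\theta_{0}<0$, and strict positivity of the numerator is equivalent to $t^{2}<\cos 4\theta_{0}/\cos 2\theta_{0}=2\cos 2\theta_{0}-1/\cos 2\theta_{0}=t_{\max}^{2}$. The subtle second claim $t_{\max}\cos\theta_{0}\ge 1$ requires more work: setting $c=\cos 2\theta_{0}$ and using $\cos^{2}\theta_{0}=(1+c)/2$, the inequality becomes (after multiplying through by the negative quantity $2c$) the polynomial inequality $p(c):=2c^{3}+2c^{2}-3c-1\le 0$; one checks $p\bigl((\sqrt{2}-2)/2\bigr)=0$ and that $p$ is monotone decreasing on the admissible range $c\in[(\sqrt{2}-2)/2,\,0)$, so the bound is saturated precisely at the extreme $|\theta_{0}|=\pi/2-\tfrac{1}{2}\arccos\tfrac{2-\sqrt{2}}{2}$ stated in the proposition. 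This verification of the boundary polynomial identity is the main nontrivial obstacle.

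For (iii) and (iv), I would compute $h'(x+iy)$ by rationalising with $|z^{2}-1|^{2}=[(x-1)^{2}+y^{2}][(x+1)^{2}+y^{2}]$. Multiplying out $(x+iy)^{3}\bigl((x^{2}-y^{2}-1)-2ixy\bigr)$ and collecting terms yields, after some algebra,
\begin{equation*}
\textrm{Re}\{h'(z)\}=\frac{x\bigl[(x^{2}+y^{2})^{2}-x^{2}+3y^{2}\bigr]}{[(x-1)^{2}+y^{2}][(x+1)^{2}+y^{2}]},\qquad \textrm{Im}\{h'(z)\}=\frac{y\bigl[(x^{2}+y^{2})^{2}-3x^{2}+y^{2}\bigr]}{[(x-1)^{2}+y^{2}][(x+1)^{2}+y^{2}]}.
\end{equation*}
Part (iii) follows since $\partial_{x}\textrm{Re}\{h\}=\textrm{Re}\{h'\}$: for $x\ge 1$ and $y\ne 0$ the bracket satisfies $(x^{2}+y^{2})^{2}\ge x^{4}\ge x^{2}$, so it exceeds $3y^{2}>0$ strictly. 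Part (iv) follows from the Cauchy--Riemann relation $\partial_{y}\textrm{Re}\{h\}=-\textrm{Im}\{h'\}$: for $x\ge 2$ we have $(x^{2}+y^{2})^{2}\ge x^{4}\ge 4x^{2}>3x^{2}$, so the bracket is positive and the sign of $\partial_{y}\textrm{Re}\{h\}$ is opposite to that of $y$, giving monotone decrease on $[0,\infty)$ and monotone increase on $(-\infty,0]$.

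Overall the proof is a sequence of elementary calculus exercises; the only delicate step is verifying the cubic inequality $p(c)\le 0$ in part (ii) and identifying that the threshold $\arccos\frac{2-\sqrt{2}}{2}$ is exactly the point at which $t_{\max}\cos\theta_{0}=1$. All other monotonicity claims reduce to showing a polynomial in $t$, $x$, or $y$ is positive on the stated range, which is routine.
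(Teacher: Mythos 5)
Your proposal is correct and follows essentially the same route as the paper's proof, with a few cosmetic variations. For parts (i)--(ii) your derivative formula
\begin{equation*}
\frac{d}{dt}\textrm{Re}\{h(te^{i\theta_{0}})\}=\frac{t^{3}\bigl(t^{2}\cos 2\theta_{0}-\cos 4\theta_{0}\bigr)}{\bigl(t^{2}-\cos 2\theta_{0}\bigr)^{2}+\sin^{2}2\theta_{0}}
\end{equation*}
is identical to the paper's (since $-\cos 4\theta_0=1-2\cos^2 2\theta_0$ and the denominator expands to $1+t^4-2t^2\cos 2\theta_0$), and you both reduce the claim $t_{\max}\cos\theta_0\geq 1$ to a cubic inequality in $s=\cos 2\theta_0$. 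The one genuine (if minor) difference is in how that cubic is handled: the paper factors it explicitly as
\begin{equation*}
\left(1-\frac{1}{s}\right)\left(s+\frac{2+\sqrt{2}}{2}\right)\left(s+\frac{2-\sqrt{2}}{2}\right)\geq 0
\end{equation*}
and reads off the sign of each factor on $s\in[(\sqrt{2}-2)/2,0)$, whereas you verify $p\bigl((\sqrt{2}-2)/2\bigr)=0$ and monotonicity of $p$ on the interval. The factorization (the root $s=1$ is easy to spot) is slightly cleaner and makes the role of the threshold $\arccos\frac{2-\sqrt{2}}{2}$ visible at a glance, but your monotonicity argument is perfectly sound. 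For (iii)--(iv) the paper splits $\log(1-z^2)=\log(1-z)+\log(1+z)$ and differentiates $\textup{Re}\{h\}$ directly, getting a sum of two simple fractions whose signs are transparent, while you compute $\textup{Re}\{h'\}$ and $\textup{Im}\{h'\}$ from the rational form $h'(z)=z^3/(z^2-1)$ and invoke the Cauchy--Riemann relation; the resulting rational expressions are equivalent, and your bound $(x^2+y^2)^2\geq x^4\geq 4x^2>3x^2$ for $x\geq 2$ does the same job as the paper's termwise estimate. Both arguments are correct; the paper's choice of decomposition just keeps the algebra a bit lighter.
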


\begin{proof} We see from 
\be \textup{Re}\{h\}=\frac{1}{2}t^{2}\cos 2\theta_{0}+\frac{1}{4}\log(1+t^{4}-2t^{2}\cos 2\theta_{0})\ee
that  \be \frac{d}{dt} \textup{Re}\{h(te^{i\theta_0})\} =\frac{t^3}{1+t^{4}-2t^{2}\cos 2\theta_{0}}  (t^{2}\cos 2\theta_{0}+1-2\cos^{2} 2\theta_{0})>0,  \quad \forall t>0,
\ee
for any given  $|\theta_0| \in [ \frac{\pi}{8},  \frac{\pi}{4}]$.  Part (i) then follows.  

For part (ii),    the monotonicity  follows from the simple  fact    $\frac{d}{dt} \textup{Re}\{h(te^{i\theta_0})\}>0, \forall t \in [0,t_{max})$.  Let $s=\cos 2\theta_0$, simple calculation shows 
\be t_{max} \cos\theta_0\geq 1\Longleftrightarrow  \left(1-\frac{1}{s}\right)\left(s+\frac{2+\sqrt{2}}{2}\right) \left(s+\frac{2-\sqrt{2}}{2}\right)\geq 0, \ee 
from which and  the assumption we complete part (ii).

Note that 
 \be \frac{d}{dx} \textup{Re}\{h(x+iy)\} =x+\frac{1}{2}\frac{x+1}{(x+1)^{2}+y^2}  +\frac{1}{2}\frac{x-1}{(x-1)^{2}+y^2}>0, \quad \forall x\geq 1,
\ee
 when $y\neq 0$ and 
 \be \frac{d}{dy} \textup{Re}\{h(x+iy)\} =-y+\frac{1}{2}\frac{y}{(x+1)^{2}+y^2}  +\frac{1}{2}\frac{y}{(x-1)^{2}+y^2}<0, \quad \forall y>0
\ee
whenever $x\geq 2$, we prove part (iii) and part (iv).
%$\frac{\pi}{2}-\frac{1}{2}\arccos\frac{2-\sqrt{2}}{2}<\frac{3\pi}{8}$ 
  \end{proof}

\section{Limiting eigenvalue density}
\label{sect:density}

The study of  scaling limits at the origin  investigated in the previous section introduces a scale in which the average spacing between eigenvalues is of order unity. A very different, but still well-defined, limiting process is the so-called limiting spectral measure in the   global scaling regime.  Usually, it has a density $\rho(x)$ with compact support $I\subset\mathbb R$ such that $\int_I \rho(x)dx=1$. Here $\rho(x)$   is referred to as the global density or limiting eigenvalue density.

For  squared singular values of  the product of independent  Ginibre matrices, i.e.,  eigenvalues of $W_M$ defined by \eqref{W1} but with $H=I_n$,  the global  limit corresponds to a change of variables $x_j\mapsto n^{M} x_j$ and  the global density is known to be the so-called Fuss--Catalan density with parameter $M$. Its $k$-th moment  ($k=0,1,\ldots$) is specified by the  Fuss--Catalan number 
\begin{equation}
\text{FC}_M(k)=\frac1{Mk+1}\binom{(M+1)k}k,
\end{equation}
see  e.g. \cite{BBCC11,NS06}.   The Catalan numbers  are the case $M=1$, corresponding to  the moments  of the  Marchenko-Pastur law in  a special case  and also  the even moments of the famous Wigner semicircle law (its odd moments vanishing).

Recently, Forrester, Ipsen and the author \cite{FIL17} turn to  the product $W_M$ in \eqref{W1} but with $H$ being  a GUE matrix,   i.e. $B=0$ in \eqref{meanGUE}.  After the change of variables $x_j\mapsto \frac{1}{\sqrt{2}}  n^{M+\frac{1}{2}} x_{j}$, they  prove that the global density  is an even function and its even moments are given by the  Fuss--Catalan numbers with parameter $2M+1$.  In this section  we investigate the global density for   the product matrix  $W_M$ with source $B$. 

Specifically, we assume that $n$ is even and 
 $b_{1}=\cdots=b_{n/2}=-b_{1+n/2}=\cdots=-b_{n}=\sqrt{n/2}a, \quad a\geq 0.$   To obtain the global density,  we need to make  the change of variables $x_j\mapsto \frac{1}{\sqrt{2}}  n^{M+\frac{1}{2}} x_{j}$.  To see  this, we may use free probability techniques; see e.g. \cite{NS06}.   
Suppose that  two selfajoint  non-commutative random variables $h$ and $w$ are free, and  at least one,  say,    $w$ is positive.   Recall that  the Stieltjes transform of $h$ with distribution $\mu$ is defined by 
\be G_{h}(z)=\int \frac{d \mu(x)}{z-x}, \qquad \textup{Im}(z)>0.\ee
 Let $S_{w}(z)$ denote the  $S$-transform of $w$, see e.g. \cite{NS06} for definition. If  $G_{h}(z)$ satisfies a functional equation $P(z,G_{h}(z))=0$, then 
we know from ~\cite{NS06} that   the Stieltjes transform $G_{hw}(z)$ of the product $hw$ satisfies
\begin{equation}\label{functional-recursion}
P\Big(zS_{w}(zG_{hw}(z)-1),\frac{zG_{hw}(z)}{S_{w}(zG_{hw}(z)-1)}\Big)=0.
\end{equation}
Moreover, we know that if $h$ is  a free convolution of   the standard semicircular law and   $\frac{1}{2}\big(\delta_{a}+\delta_{-a}\big)$ and if $w$ is given by the free Poisson distribution with parameter 1 (i.e. Mar\v cenko--Pastur law), then  $S_w(z)=1/(1+z)$ and $G_h$ satisfies the cubic equation 
\begin{equation} (G_{h}-z)^2 G_{h}+(1-a^{2}) G_{h}-z=0.
\end{equation}
%We can now use that the limiting spectral  distributions for the GUE with external  source and the Wishart ensemble  is the free normal and the free Poisson, respectively. 
Thus, using~\eqref{functional-recursion} $M$ times, we see that  the Stieltjes transform of limiting spectral measure for our product~\eqref{W1} indeed satisfies    a   functional equation 
\be  \big(z^{2M-1}g^{2M+1}-1\big)^{2}zg+(1-a^2)z^{2M-1}g^{2M+1}-1=0. \label{algebraiceq}\ee
%\end{example}  

Considering two special cases of \eqref{algebraiceq}, we can give explicit forms  of the limiting   eigenvalue densities  denoted by $\rho(a;x)$ and further compare leading  asymptotic behaviour near the origin. 
 
{\bf{Case 1: $a=0$.} }   Let 
\begin{equation}
  x^{2} = \frac{\big(\sin(( 2M+ 2)\varphi)\big)^{2M + 2}}{\sin \varphi \, \big(\sin(( 2M+ 1)\varphi) \big)^{ 2M+ 1}}, \qquad  -\frac{\pi}{2M+2}\leq \varphi\leq  \frac{\pi}{2M+2},
\end{equation}
\eqref{algebraiceq} has two special solutions 
\begin{equation}
  xg_{\pm} = \frac{\sin(( 2M+ 2)\varphi)}{\sin(( 2M+ 1)\varphi)}e^{\pm i\varphi}
\end{equation}
from which the density reads 
\be \rho(0;x)
=\frac{1}{\pi }    \sqrt{\frac{ \sin\varphi}{\sin(2M+ 1)\varphi}} \left(\frac{ \sin(2M+1)\varphi}{\sin(2M+ 2)\varphi}\right)^{ M} \sin\varphi, \quad  -\frac{\pi}{2M+2}\leq \varphi\leq  \frac{\pi}{2M+2}.
\ee
Moreover,  as $ x \rightarrow 0$ we have the leading term
\be \rho(0;x)\sim  \frac{1}{\pi } \sin\frac{\pi}{2M+2}\, |x|^{-1+\frac{1}{M+1}}.\ee
These  results have been obtained  in \cite{FIL17}.

{\bf{Case 2: $a=1$.} }  In this case \eqref{algebraiceq}  reduces to
\be  \big(z^{2M-1}g^{2M+1}-1\big)^{2}zg-1=0. \label{algebraiceq2}\ee
 Let 
\begin{equation}
  x^{2} = \frac{\big(\sin(( 4M+ 3)\varphi)\big)^{4M+ 3}}{\sin \varphi \, \big(\sin(( 4M+ 2)\varphi) \big)^{ 4M+ 2}}, \qquad  -\frac{\pi}{4M+3}\leq \varphi\leq  \frac{\pi}{4M+3},
\end{equation}
\eqref{algebraiceq2} has two special solutions 
\begin{equation}
  xg_{\pm} = \left(\frac{ \sin(4M+3)\varphi}{\sin(4M+ 2)\varphi}\right)^{ 2}e^{\pm 2 i\varphi},
\end{equation}
from which the density reads 
\be \rho(1;x)%=\rho(\varphi)
=\frac{1}{\pi }    \sqrt{\frac{ \sin\varphi}{\sin(4M+ 3)\varphi}} \left(\frac{ \sin(4M+2)\varphi}{\sin(4M+ 3)\varphi}\right)^{2 M-1} \sin2\varphi,  \ -\frac{\pi}{4M+3}\leq \varphi\leq  \frac{\pi}{4M+3}.\ee
Moreover,  as $ x \rightarrow 0$ we have the leading term
\be \rho(1;x)\sim  \frac{1}{\pi } \sin\frac{2\pi}{4M+3}\, |x|^{-1+\frac{4}{4M+3}}.\ee

Generally, we expect from the algebraic equation  \eqref{algebraiceq}  that there   exist exactly three families of  blow-up exponents at the origin  for  $\rho(a;x)$, which reads  as $x \rightarrow 0$
\be \rho(a;x)\sim \begin{cases} c_{a}\, |x|^{-1+\frac{1}{M+1}},  & \quad 0\leq a <1;\\
c_a \, |x|^{-1+\frac{4}{4M+3}},  & \quad a=1;\\
c_{a}\, |x|^{-1+\frac{2}{2M+1}},  & \quad a>1.\end{cases}\ee
If so, this will be consistent with the local scalings chosen in Theorem \ref{transition}. 

Finally, we stress that the above parametrization representations  are of vital importance in proving the sine kernel in the bulk, see e.g.  \cite{LWZ14} for more details. 

\paragraph{Acknowledgements} 
This work was partially supported   by   ERC Advanced Grant No. 338804,  the Youth Innovation Promotion Association CAS  \#2017491, the Fundamental Research Funds for the Central Universities Grants WK0010450002 and WK3470000008,  and  Anhui Provincial Natural Science Foundation \#1708085QA03.   The author would particularly like to thank L. Erd\H{o}s  for support and P.J. Forrester  for  useful discussions.

\end{document}